\newcommand{\msc}[2][2000]{%
  \let\@oldtitle\@title%
  \gdef\@title{\@oldtitle\footnotetext{#1 \emph{Mathematics subject
        classification.} #2}}%
}
\theoremstyle{plain}
\newtheorem{theorem}{Theorem}[section]
\newtheorem{definition}[theorem]{Definition}
\newtheorem{lemma}[theorem]{Lemma}
\newtheorem{corollary}[theorem]{Corollary}
\newtheorem{proposition}[theorem]{Proposition}
\theoremstyle{remark}
\newtheorem{remark}[theorem]{Remark}
\newtheorem{example}[theorem]{Example}
\def\C{{\mathbb C}}
\def\R{{\mathbb R}}
\def\N{{\mathbb N}}
\def\Z{{\mathbb Z}}
\def\T{{\mathbb T}}
\def\H{{\mathcal H}}
\def\O{\mathcal O}
\def\F{\mathcal F}
\def\({\left(}
\def\){\right)}
\def\<{\left\langle}
\def\>{\right\rangle}
\def\le{\leqslant}
\def\ge{\geqslant}
\def\Tend#1#2{\mathop{\longrightarrow}\limits_{#1\rightarrow#2}}
\def\d{{\partial}}
\def\eps{\varepsilon}
\def\om{\omega}
\def\si{{\sigma}}
\numberwithin{equation}{section}
\begin{document}

\title[Ill-posedness for periodic NLS]{Norm-inflation for periodic
   NLS equations in negative Sobolev spaces}

\author[R. Carles]{R\'emi Carles}
\address{CNRS \& Univ. Montpellier\\Institut Montpelli\'erain
  Alexander Grothendieck
\\CC51\\F-34095 Montpellier}
\email{Remi.Carles@math.cnrs.fr}

\author[T. Kappeler]{Thomas Kappeler}
\address{Institut f\"ur Mathematik\\Universit\"at Z\"urich\\Winterthurerstr 190\\ CH-8057 Z\"urich}
\email{thomas.kappeler@math.uzh.ch}
\begin{abstract}
In this paper we consider Schr\"odinger equations with nonlinearities
of odd order $2\sigma +1$ on $\T^d$.
We prove that for $\sigma d \ge 2$, they are strongly illposed in the Sobolev space $H^s$ for any $s<0$,
exhibiting norm-inflation with infinite loss of regularity. In the case of the
one-dimensional cubic nonlinear 
Schr\"odinger equation and its renormalized version we prove such a result for $H^s$
with $s< -2/3.$
\end{abstract}
\thanks{Supported in part by the French ANR projects
  SchEq (ANR-12-JS01-0005-01), BECASIM
  (ANR-12-MONU-0007-04), and the Swiss National
Science Foundation} 
\maketitle

\section{Introduction}
\label{sec:intro}

\noindent We consider nonlinear Schr\"odinger (NLS) equations of the form
\begin{equation}
  \label{eq:nls}
  i\d_t \psi+\frac{1}{2}\Delta \psi= \mu |\psi|^{2\si}\psi,\quad 
\psi = \psi(t, x)\in \mathbb C ,\,\,  t \in \mathbb R, \,\,x\in \T^d
\end{equation}
and the renormalized versions
\begin{equation}
  \label{eq:nlsaverage}
  i\d_t \psi+\frac{1}{2}\Delta \psi= \mu |\psi|^2\psi-\frac{2\mu}{(2\pi)^d}\(\int_{\T^d}
    |\psi(t,x)|^2 dx\)\psi \, ,
\end{equation}
where $\si\ge 1$ is an integer, $\T =\R/2\pi\Z$, $\Delta = \sum_{k=1}^d \partial_{x_k}^2$, and $\mu \in \{1, -1\}.$

For any $s \in \mathbb R$ and $1 \le p \le \infty$, denote by $\F L^{s,p}(\T^d) \equiv \F L^{s,p}(\T^d, \mathbb C)$ the Fourier-Lebesgue space,
\begin{equation*}
  \F L^{s,p}(\T^d)=\{f\in \mathcal D'(\T^d, \mathbb C);\quad \<\cdot\>^s\hat
  f(\cdot)\in \ell^p(\Z^d)\}
\end{equation*}
with $\ell^p(\Z^d) \equiv \ell^p(\Z^d, \mathbb C)$ denoting the standard 
$\ell^p$ sequence space.
Note that 
for any $s \in \mathbb R,$ 
$\F L^{s,2}(\T^d)$ is the 
Sobolev space $H^s(\T^d) \equiv H^s(\T^d, \mathbb C)$
and for any $1 \le p \le \infty,$  $\cap_{s \in \mathbb R } \F L^{s,p}(\T^d)$
coincides with $C^\infty(\T^d) \equiv C^\infty(\T^d, \mathbb C)$.
The aim of this paper is to establish the following strong ill-posedness property of 
equations \eqref{eq:nls} and \eqref{eq:nlsaverage}.
\begin{theorem}\label{theo:main}
Let $\si,d\ge 1$ be integers.\\
$(i)$ Assume that $d\si\ge 2$ in the case of \eqref{eq:nls} and $d\ge 2$ in the case of \eqref{eq:nlsaverage}. Then for any $s<0$, there exists a sequence of 
initial data $(\psi_n(0) )_{n \ge 1}$ in $C^\infty(\T^d)$ such that
\begin{equation*}
  \|\psi_n(0)\|_{\F L^{s,p}(\T^d)}\Tend n \infty 0, \quad \forall p\in [1,\infty],
\end{equation*}
 and  a sequence of times $t_n\to 0$ such that the corresponding solutions $\psi_n$ to
 \eqref{eq:nls} respectively \eqref{eq:nlsaverage} satisfy
 \begin{equation*}
   \|\psi_n(t_n)\|_{\F L^{r,p}(\T^d)}\Tend n \infty  \,\, \infty,\quad \forall
   r\in \R,\ \forall p\in [1,\infty].
 \end{equation*}
$(ii)$ If $d=\si=1$, then for any $s<-2/3$, there exists a sequence of
initial data $\psi_n(0)\in C^\infty(\T)$ with
\begin{equation*}
  \|\psi_n(0)\|_{\F L^{s,p}(\T)}\Tend n \infty 0,\quad \forall p\in [1,\infty],
\end{equation*}
 and  a sequence of times $t_n\to 0$ such that the corresponding solutions $\psi_n$ to
 \eqref{eq:nls}  respectively \eqref{eq:nlsaverage} satisfy
 \begin{equation*}
   \|\psi_n(t_n)\|_{\F L^{r,p}(\T)}\Tend n \infty \,\, \infty,\quad \forall
   r\in \R,\ \forall p\in [1,\infty].
 \end{equation*}
\end{theorem}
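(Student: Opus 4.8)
The plan is, first, to reduce everything to \eqref{eq:nls}. For \eqref{eq:nlsaverage} (so $\si=1$), conservation of mass, $\int_{\T^d}|\psi(t,x)|^2\dd x=\int_{\T^d}|\psi(0,x)|^2\dd x$, makes $\rho:=\tfrac{2\mu}{(2\pi)^d}\int_{\T^d}|\psi(t,x)|^2\dd x$ a genuine constant along the flow, so $\phi:=e^{-i\rho t}\psi$ solves \eqref{eq:nls} with $\phi(0)=\psi(0)$ and $|\hat\phi(t,k)|=|\hat\psi(t,k)|$ for every $k\in\Z^d$; hence Theorem~\ref{theo:main} for \eqref{eq:nlsaverage} follows from Theorem~\ref{theo:main} for \eqref{eq:nls} with the same data, and it suffices to treat \eqref{eq:nls}. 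Second, I would reduce to a statement about the spatial average: since $\|f\|_{\F L^{r,p}(\T^d)}\ge|\hat f(0)|$ for every $r\in\R$ and $p\in[1,\infty]$ (and $|\hat f(0)|\le\|f\|_{\F L^{s,p}}$ as well), it is enough to construct $\psi_n(0)\in C^\infty(\T^d)$ with $\|\psi_n(0)\|_{\F L^{s,p}}\to0$ uniformly in $p$ and times $t_n\to0$ such that the spatial average $\hat\psi_n(t_n,0)$ of the solution tends to $\infty$ in modulus; this single scalar bound yields norm inflation in all $\F L^{r,p}$ simultaneously, i.e.\ the infinite loss of regularity. (By conservation of mass $\|\psi_n(t_n)\|_{L^2}=\|\psi_n(0)\|_{L^2}$, so the data will be small only in negative regularity, consistently with $s<0$.)

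Next I would set up a geometric optics ansatz. For highly oscillatory, sufficiently concentrated data the Laplacian is negligible on a short time interval, and the flow of \eqref{eq:nls} is approximated, to leading order, by that of the ODE $i\d_t\psi=\mu|\psi|^{2\si}\psi$, whose solution issued from $\psi_n(0)$ is the explicit $\psi_{\mathrm{app},n}(t,x):=\psi_n(0,x)\exp\!\big(-i\mu t|\psi_n(0,x)|^{2\si}\big)$: its modulus is frozen and its phase winds at the rate $|\psi_n(0,x)|^{2\si}$. I would take
\[
  \psi_n(0,x)=K_n\sum_{j}\chi\big(N_n(x-x_j)\big)\,e^{iN_n\<e,x\>},
\]
a finite sum of disjointly supported, phase-aligned translates of a fixed profile $\chi\in C_c^\infty$ vanishing to infinite order on $\d(\operatorname{supp}\chi)$, with $e$ a fixed direction and $K_n,N_n\to\infty$ at rates to be optimized. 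Since the datum is localized at frequencies $\sim N_n$, the quantity $\|\psi_n(0)\|_{\F L^{s,p}}$ is, uniformly in $p$, of order at most $K_n\<N_n\>^{s}$, which tends to $0$ as soon as $K_n\ll N_n^{|s|}$ — exactly the room that the negativity of $s$ provides. Crucially, the nonlinear phase $t|\psi_n(0,x)|^{2\si}=tK_n^{2\si}|\chi(N_n(x-x_j))|^{2\si}$ is blind to the fast oscillation $e^{iN_n\<e,x\>}$, hence genuinely non-constant, with amplitude $\Lambda_n\sim t_nK_n^{2\si}$ at time $t_n$.

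I would then compute the spatial average of $\psi_{\mathrm{app},n}$, namely the oscillatory integral
\[
  \hat\psi_{\mathrm{app},n}(t,0)=\frac1{(2\pi)^d}\int_{\T^d}\psi_n(0,x)\,e^{-i\mu t|\psi_n(0,x)|^{2\si}}\dd x ,
\]
which, after unfolding the concentration and the superposition, reduces to (several copies of) the Fourier coefficient of $\chi\,e^{-i\Lambda_n(\cdot)}$ at frequency $\sim N_n$, with $\Lambda_n\sim t_nK_n^{2\si}$. Choosing $t_n$ so that $\Lambda_n$ is large — possible with $t_n\to0$ precisely because $K_n\to\infty$ — a stationary phase / van der Corput estimate bounds this coefficient below by a negative power of $\Lambda_n$ times powers of $K_n,N_n$, and balancing this lower bound against $K_n\<N_n\>^s\to0$ and $t_n\to0$ is an elementary, if fiddly, optimization over $K_n,N_n,t_n$. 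It can be carried through for every $s<0$ when the equation is $L^2$-critical or supercritical, i.e.\ when $s_c=\tfrac d2-\tfrac1\si\ge0$, which is exactly the condition $\si d\ge2$ (supercriticality being what makes the concentration and the superposition affordable), and only for $s<-2/3$ when $\si=d=1$.

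The hard part, as always in such constructions, is the error estimate: showing that $\psi_{\mathrm{app},n}$ approximates the true solution $\psi_n$ well enough \emph{at the level of the spatial average} at time $t_n$, i.e.\ $|\hat\psi_n(t_n,0)-\hat\psi_{\mathrm{app},n}(t_n,0)|\ll|\hat\psi_{\mathrm{app},n}(t_n,0)|$. With $w:=\psi_n-\psi_{\mathrm{app},n}$ one has $w(0)=0$ and $i\d_t w+\tfrac12\Delta w=\tfrac12\Delta\psi_{\mathrm{app},n}+\mu\big(|\psi_n|^{2\si}\psi_n-|\psi_{\mathrm{app},n}|^{2\si}\psi_{\mathrm{app},n}\big)$. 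The key structural point is that $\tfrac12\Delta\psi_{\mathrm{app},n}$ has \emph{vanishing} spatial average, so the zero mode of $w$ is driven by the nonlinear difference alone: $|\hat w(t,0)|\lesssim\int_0^t\big(\|\psi_n(s)\|_{L^\infty}^{2\si}+\|\psi_{\mathrm{app},n}(s)\|_{L^\infty}^{2\si}\big)\|w(s)\|_{L^1}\dd s$, while $\|w\|$ itself is controlled by an energy (or Strichartz) estimate whose source is $\tfrac12\Delta\psi_{\mathrm{app},n}$. I expect the main obstacle to be the three-way competition between the Gronwall factor $\exp(C\Lambda_n)$ of the linearized flow, the size $\sim N_n^2\|\psi_{\mathrm{app},n}\|$ of that source, and the requirement that $\Lambda_n$ grow for the inflation to occur; reconciling these is exactly what forces $\si d\ge2$ in the general case. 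In the borderline case $\si=d=1$ the balance is far tighter, the crude energy estimate is insufficient, and one must instead exploit the sharp dispersive estimates for the cubic equation on $\T$ ($L^4$ Strichartz / Bourgain-type bounds) — this is what limits the argument to $s<-2/3$. The remaining points — local existence of $\psi_n$ on $[0,t_n]$ and the bootstrap bound on $\|\psi_n(s)\|_{L^\infty}$ — follow once the error is under control.
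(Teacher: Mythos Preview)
Your gauge reduction of \eqref{eq:nlsaverage} to \eqref{eq:nls} is correct and tidier than the paper's separate treatment, and reducing everything to blowup of $|\hat\psi_n(t_n,0)|$ is exactly the right target. But from there your route diverges from the paper's, and the sketch has a genuine gap in the error estimate that prevents it from reaching the stated ranges of $s$.

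You use the \emph{fully nonlinear} dispersionless approximation $\psi_{\mathrm{app}}=\psi(0)\,e^{-i\mu t|\psi(0)|^{2\si}}$ with bump data of amplitude $K_n$. The linearised flow around $\psi_{\mathrm{app}}$ then carries a Gronwall factor $\exp\!\big(C\!\int_0^{t_n}\|\psi\|_{L^\infty}^{2\si}\big)=\exp(C\Lambda_n)$, so you cannot let $\Lambda_n\to\infty$; with $\Lambda_n\sim 1$ one is forced to $t_n\sim K_n^{-2\si}$. A back-of-the-envelope balance (take $M_n\sim N_n^d$ bumps so the zero mode is $\sim K_n$) gives $\|w\|_{L^2}\lesssim t_n\|\Delta\psi_{\mathrm{app}}\|_{L^2}\sim K_n^{1-2\si}N_n^{2}$, and requiring this $\ll K_n$ forces $N_n\ll K_n^{\si}$. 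Combined with $K_n\ll N_n^{|s|}$ (smallness of the data in $\F L^{s,1}$) this yields only $|s|>1/\si$, not every $s<0$. Your observation that $\Delta\psi_{\mathrm{app}}$ has zero mean is correct but does not rescue the argument: the bottleneck is controlling $\|w\|$ itself, and $e^{it\Delta/2}$ is not bounded on $L^1$, so one is stuck with the $L^2$ (or Strichartz) route and the same constraint reappears.

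The paper avoids this entirely by working in the \emph{weakly nonlinear} regime. It rescales $\psi$ to $u^\eps$ solving $i\eps\d_t u^\eps+\tfrac{\eps^2}{2}\Delta u^\eps=\eps|u^\eps|^{2\si}u^\eps$, takes plane-wave data $u^\eps(0)=\sum_{j\in S}\alpha_j e^{ij\cdot x/\eps}$ with $S\subset\Z^d\setminus\{0\}$ finite, and approximates by $u^\eps_{\mathrm{app}}=\sum_j a_j(t)e^{i\phi_j/\eps}$ where $\phi_j=j\cdot x-\tfrac{|j|^2}{2}t$ are the \emph{free} phases and the amplitudes $a_j$ solve a resonant ODE. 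The set $S$ is chosen so that $0\notin S$ but $\dot a_0(0)\neq0$ via $\mathrm{Res}_0$; this combinatorial step is precisely where $\si d\ge 2$ enters (for $d\ge2$ a rectangle configuration works, for $d=1,\si\ge2$ a Pythagorean quintuple). Because the nonlinearity is of size $\eps$, the error $u^\eps-u^\eps_{\mathrm{app}}$ is $O(\eps)$ in the Wiener algebra on a \emph{fixed} time interval, with no large exponential. Undoing the scaling leaves a free rational parameter $\beta>0$ subject only to $|s|(1{+}\beta)/2>\beta/(2\si)$, so $\beta$ can be taken arbitrarily small and every $s<0$ is reached.

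For $d=\si=1$ the paper's first-order resonant system is decoupled (if $\alpha_0=0$ then $a_0\equiv0$), so a second-order expansion $\sum_j(a_j+\eps b_j^\eps)e^{i\phi_j/\eps}$ is required. With the two-mode data $e^{ix/\eps}+e^{2ix/\eps}$ one computes explicitly $|b_0^\eps(t)|\sim 2|\sin((1{+}\eps)t/(2\eps))|$, so the zero mode appears at order $\eps$. This extra $\eps$ forces $\beta>2$ for inflation, and together with $|s|>\beta/(1{+}\beta)$ gives exactly $s<-2/3$. Nothing in your scheme singles out this threshold; the appeal to $L^4$ Strichartz on $\T$ is not connected to any computation.
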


\noindent Theorem \ref{theo:main} implies the following

\begin{corollary}\label{cor:noapriori}
  Let $d,\si\ge 1$ be integers and let $s$ be as in
  Theorem~\ref{theo:main}. Furthermore assume that $p_1,p_2\in [1,\infty]$ and $T>0$. 
Then for no $r\in \R$, there exists a neighborhood $U$ of $0$ in $\F L^{s,p_1}(\T^d)$
and a continuous function $M_r: \mathbb R_{\ge 0} \to \mathbb R_{\ge 0}$ such that 
any smooth solution $\psi $ to \eqref{eq:nls}
  (or \eqref{eq:nlsaverage})
  satisfy the a priori estimate
  \begin{equation*}
    \|\psi\|_{L^\infty(0,T;\F L^{r,p_2}(\T^d))}\le
    M_r\(\|\psi(0)\|_{\F L^{s,p_1}(\T^d)}\). 
  \end{equation*}
In particular, for $p_1=p_2=2$, there is 
  no continuous function $M_r$ such that smooth solutions to \eqref{eq:nls}
  respectively \eqref{eq:nlsaverage}
  satisfy the a priori estimate
  \begin{equation*}
    \|\psi\|_{L^\infty(0,T;H^r(\T^d))}\le
    M_r\(\|\psi(0)\|_{H^s(\T^d)}\). 
  \end{equation*}
\end{corollary}

\smallbreak 

\noindent
{\em Comments:} In connection with the study of ill-posedness of nonlinear 
Schr\"odinger and nonlinear wave equations on the whole space $\mathbb R^d$,
Christ, Colliander, and Tao introduced in \cite{CCT2} (cf. also \cite{CCT0}), 
the notion of norm inflation with respect to a given (Sobolev) norm, saying that there exist a sequence of smooth initial
data $(\psi_n(0))_{n \ge 1}$ and a sequence of times $(t_n)_{n \ge1}$, both converging
to $0$, so that the corresponding smooth solutions $\psi_n,$ evaluated at $t_n,$ is unbounded. Further results in this direction
were obtained in \cite{ACMA}, \cite {CDS10}, \cite{CDS12},  where in
particular norm inflation together with finite or infinite loss of regularity
was established for various equations on $\mathbb R^d$. 
Theorem~\ref{theo:main} states that such type of results 
hold true for nonlinear Schr\"odinger equations on the torus $\mathbb T^d$. 

\noindent Recently, the renormalized cubic Schr\"odinger equation
\eqref{eq:nlsaverage} has caught quite some attention. In particular, on $\mathbb T,$
some well-posedness / ill-posedness results below $L^2$ have been established -- see \cite{Ch07}, 
\cite{GrHe08} as well as \cite{Ch-p}, \cite{OhSu12}. Although there are indications that \eqref{eq:nlsaverage} 
has better stability properties than \eqref{eq:nls}, our results show 
 no difference between the two equations as far as norm inflation concerns.

\noindent Finally let us remark that 
the scaling symmetry of \eqref{eq:nls}, considered
on the Sobolev spaces $\H^{s}(\R^d),$
$\psi (t,x) \mapsto \lambda^{-2/\si}\psi(\frac{t}{\lambda^2}, \frac{x}{\lambda})$
for $\lambda > 0$ has as critical exponent $s_{2, \si} = \frac{d}{2} - \frac{1}{\si}$
since for this value of $s$, the homogeneous $H^s-$norm is invariant
under this scaling.
More generally, for any given $1 \le p \le \infty$, the homogeneous
$W^{s,p}(\R^d)-$norm is invariant for $s_{p, \si} = \frac{d}{p} - \frac{1}{\si}$.
It suggests that the $\F L^{s,p}(\R^d)-$norm is invariant for 
$s^{FL}_{p, \si} = \frac{d}{p'} - \frac{1}{\si}$ with $\frac{1}{p'} = 1- \frac{1}{p}$.
Furthermore, the Galilean invariance of \eqref{eq:nls}, 
$\psi(t,x) \mapsto e^{-iv\cdot x/2} e^{i|v|^2t/4} \psi(t, x - vt)$
for arbitrary velocities $v,$ leaves the $\F L^{0,p}(\R^d)-$norm invariant. Note that the statements of Theorem~\ref{theo:main} for \eqref{eq:nls}, considered 
on $H^s(\mathbb T^d)$, are valid in a range of $s$, 
contained in the half line $ - \infty < s \le  \mbox{min}(s_{2, \si}, 0).$
\\

\smallbreak

\noindent
{\em Method of proof:} Let us give a brief outline of the proof of 
item (i) of Theorem~\ref{theo:main}
in the case of equation \eqref{eq:nls}.
Following the approach, developed in \cite{CDS10} and \cite{CDS12} for equations
 such as nonlinear Schršdinger equations on the whole space $\mathbb R^d$,  
we introduce the following version of \eqref{eq:nls}, 
\begin{equation*}
  \label{eq:nlssemi}
  i\eps\d_t u^\eps +\frac{\eps^2}{2}\Delta u^\eps =
  \eps|u^\eps|^{2\si}u^\eps,\quad x\in \T^d
\end{equation*}
with $\eps$ being a small parameter. 
The equation is in a form, referred to as  weakly nonlinear geometric optics.
A solution $u^\eps$ of it, which is $2\pi-$periodic in its $x- $variables,
 is related to a solution $\psi$ of \eqref{eq:nls} by
\begin{equation*} 
 u^\eps(t,x) = \eps^{\beta/(2\si)} \psi\(\eps^\beta t, \eps^{\frac{\beta-1}{2}} x\)
\end{equation*}
where $\beta < 0$ is a free parameter, but chosen so that $\psi$ is also
$2\pi-$periodic in the $x-$variables. We then construct a first order approximate solution 
$u_{\rm app}^\eps (t,x)$ of $ u^\eps(t,x)$ of the form
$u_{\rm app}^\eps (t,x) = \sum_{j\in \Z^d}a_j(t) e^{i\phi_j(t,x)/\eps}$
where the phase function $\phi_j(t,x)$ and the amplitude $a_j(t)$ are determined in
such a way that $u_{\rm app}^\eps (t,x)$ solves the above equation for $u^\eps$
up to $\O\(\eps^2\)$. It turns out that $\phi_j(t,x) = j\cdot x -\frac{|j|^2}{2}t$ and
that the $a_j$'s satisfy a system of ODEs,
defined in terms of the resonance sets
\begin{equation*}
\begin{aligned}
  \mathrm{Res}_j=\left\{(k_\ell)_{1\le \ell\le 2\si+1}\in \Z^{(2\si+1)d} \, ;\quad 
    \sum_{\ell=1}^{2\si+1}(-1)^{\ell+1}k_\ell= j \, ;\quad
    \sum_{\ell=1}^{2\si+1}(-1)^{\ell+1}|k_\ell|^2= |j|^2\right\}. 
\end{aligned}
\end{equation*}
The strategy to prove Theorem~\ref{theo:main} in the case considered is then 
to choose initial data for $u^\eps$ of the form 
$u^\eps(0,x) = \sum_{j\in S}\alpha_j e^{i\phi_j(0,x)/\eps}$ with
$S \subset \mathbb Z^d$ finite and $0 \notin S$ 
so that the zero mode $a_0(t) e^{i\phi_0(t,x)/\eps}$ is created by resonant interaction
of nonzero modes at leading order, $\dot a_0(0) \ne 0.$
With an appropriate choice of the scaling parameter $\beta$, the
zero mode of $\psi$ comes with a factor which is increasing in $\eps$.
Since the absolute value of the zero mode 
bounds the norm $\| \cdot \|_{\F L^{s,p}(\mathbb T^d)}$ of any Fourier Lebesgue space from below, 
it follows that
for any $s < 0, 1 \le p \le \infty,$ the sequence  
$( \|u^{\eps_n}(t_n)\|_{\F L^{s,p}(\mathbb T^d)} )_{n \ge 1}$ 
is unbounded for appropriate sequences $(\eps_n)_{n \ge 1}$, 
$(t_n)_{n \ge 1}$, converging both to $0$. The proofs of the remaining statements of Theorem~\ref{theo:main} are similar, although a little bit more involved.

\smallbreak

\noindent
{\em Related work:} There are numerous works on ill-posedness for equations such as
 \eqref{eq:nls}. Besides the papers already cited, we refer to
the dispersive wiki page \cite{DiWi}. In \cite{OhSu12}  one finds a
quite detailed account  
of existing results on the one-dimensional cubic NLS equation below $L^2$.
\\

\smallbreak 

\noindent
{\em Organisation:} In Section~\ref{sec:wnlgo0} we recall the geometrical
optics approximation of first order and a refined version of it, the latter being
 needed for the proof
of item (ii) of Theorem~\ref{theo:main}. In the subsequent section, we provide 
estimates for the approximations of first order in the functional setup of the Wiener algebra.
In Section~\ref{sec:approx}, the resonant sets of integer vectors, coming up in the
construction of the approximate solutions, are studied in more detail. 
Finally, in Section~\ref{sec:average}, we prove estimates for the approximations
of second order, needed for treating equation \eqref{eq:nlsaverage}.
With these preparations, we then prove Theorem~\ref{theo:main} in Section~\ref{sec:proofs}.

\noindent
The case of focusing ($\mu = -1$) NLS equations can be treated in exactly the
same fashion as the case of defocusing ($\mu = 1$) ones. Hence to simplify notation, 
in what follows we will only consider equations \eqref{eq:nls} and \eqref{eq:nlsaverage} with $\mu = 1.$ 
As already pointed out in \cite{CCT0},  results of the type stated in Theorem~\ref{theo:main}  for defocusing NLS equations maybe considered as more surprising as the corresponding results for focusing ones.

\smallbreak

\noindent
{\em Added in proof:} After this work has been completed, 
Nobu Kishimoto informed us that in unpublished work,
he obtained results similar to ours, using techniques introduced by
Bejenaru and Tao (\cite{BeTa05}, further developed in \cite{IwOg15}).  
In fact, his method of proof, being different from ours (it is based
on a multiscale analysis), allows him
to prove norm inflation in the Sobolev spaces $H^s(\mathbb T)$
with $s \le -1/2$ for the cubic NLS equation in one dimension and its
normalized version.

\section{Geometrical optics approximation: generalities}
\label{sec:wnlgo0}

\subsection{Setup}
\label{sec:framework}
 For $0 < \eps \le 1$, we consider
\begin{equation}
  \label{eq:nlssemi}
  i\eps\d_t u^\eps +\frac{\eps^2}{2}\Delta u^\eps =
  \eps|u^\eps|^{2\si}u^\eps,\quad x\in \T^d,
\end{equation}
along with initial data which are superpositions of plane waves,
\begin{equation}
  \label{eq:ci}
  u^\eps(0,x) = \sum_{j\in \Z^d}\alpha_j e^{ij\cdot x/\eps}, \quad
  \alpha_j\in \C. 
\end{equation}
To insure that $ u^\eps(0,x)$ is $2\pi -$periodic in $x$ we will assume throughout the paper
 that the 
parameter $\eps$ is of the from $\eps = 1/N$
for some $N \in \mathbb N$.
The goal of this and the next two sections is to describe the
solution $u^\eps$ in the limit $\eps\to 0$. 
Let us begin by briefly recalling the results detailed in \cite{CDS10}. 
We construct first order approximations of solutions 
of \eqref{eq:nlssemi}--\eqref{eq:ci} as a superposition of modes, 
\begin{equation}\label{eq:approx1}
  u_{\rm app}^\eps (t,x) = \sum_{j\in \Z^d}a_j(t) e^{i\phi_j(t,x)/\eps}.
\end{equation}
The regime \eqref{eq:nlssemi} goes under the name of weakly nonlinear
geometric optics (see e.g. \cite{CaBook}) since according to the considerations
below, the phase functions $\phi_j$ turn out to be not affected by the nonlinearity 
in \eqref{eq:nlssemi}, while the amplitudes $a_j$ are. 
To find $\phi_j$ and $a_j$, 
substitute  the ansatz \eqref{eq:approx1}
 into \eqref{eq:nlssemi} and for each $j \in \mathbb Z^d,$ consider the terms containing
$e^{i\phi_j/\eps}$ separately. We then determine $\phi_j$ and $a_j$
so as to cancel the terms of lowest orders in $\eps$.
Since the initial data are assumed to be of the form \eqref{eq:ci}, we find for any given
$j \in \mathbb Z^d$ at order $\O\(\eps^0\)$,
\begin{equation*}
  \O\(\eps^0\):\quad \d_t \phi_j+\frac{1}{2}|\nabla \phi_j|^2 =0,\quad
  \phi_j(0,x)=j\cdot x,
\end{equation*}
hence
\begin{equation}
  \label{eq:phi_j}
  \phi_j(t,x) = j\cdot x -\frac{|j|^2}{2}t.
\end{equation}
In particular, for $j=0$ one has $\phi_0= 0$ and hence the zero mode 
$a_0 e^{i\phi_0/\eps}$ equals $a_0$ and is thus independent of $\eps$.
At next order, we obtain the following evolution equation for the amplitude $a_j$
\begin{equation}\label{eq:transport}
  \O\(\eps^1\):\quad i \dot a_j = \sum_{(k_1,k_2,\cdots,k_{2\si+1})\in \mathrm{Res}_j} a_{k_1} \bar
  a_{k_2}\dots a_{k_{2\si+1}},\quad  a_j(0)=\alpha_j,
\end{equation}
where $\dot a_j$ denotes the $t-$derivative of $ a_j$ and $\mathrm{Res}_j \subset \Z^{(2\si+1)d}$ 
the resonant set, associated to $j \in \mathbb Z^d$ and the nonlinearity
$|u^\eps|^{2\si}u^\eps$ . It is given by 
\begin{equation*}
\begin{aligned}
  \mathrm{Res}_j=\left\{(k_\ell)_{1\le \ell\le 2\si+1}\in \Z^{(2\si+1)d} \, ;\quad 
    \sum_{\ell=1}^{2\si+1}(-1)^{\ell+1}k_\ell= j \, ;\quad
    \sum_{\ell=1}^{2\si+1}(-1)^{\ell+1}|k_\ell|^2= |j|^2\right\}. 
\end{aligned}
\end{equation*}
We describe these sets in more detail in
Section~\ref{sec:approx}. First we want to explain why the above sum is
restricted to the resonant set, preparing in this way the justification of the
geometrical optics approximation, presented in Section~\ref{sec:wnlgo1}.
\smallbreak

 Duhamel's formulation of \eqref{eq:nlssemi}--\eqref{eq:ci} reads
\begin{equation}\label{eq:duhamel}
  u^\eps(t) = e^{i\frac{t}{2}\eps\Delta}u^\eps(0) - i\int_0^t
  e^{i\frac{t-\tau}{2}\eps\Delta}\(|u^\eps|^{2\si}u^\eps\)(\tau)d\tau \, . 
\end{equation}
Substituting the expression of the approximate solution \eqref{eq:approx1}
into the above formula, we get
\begin{equation*}
  \sum_{j\in \Z^d} a_j (t)  e^{i\phi_j(t,x)/\eps}  \approx   \sum_{j\in \Z^d}\alpha_j
  e^{i\frac{t}{2}\eps\Delta} \(e^{i\phi_j(0,x)/\eps}\)
\end{equation*}
\begin{equation*}
- i\int_0^t
  e^{i\frac{t-\tau}{2}\eps\Delta} \sum_{k_1,k_2,\cdots,k_{2\si+1}\in
    \Z^d} a_{k_1}(\tau) e^{i\phi_{k_1}/\eps}\bar a_{k_2}(\tau)e^{-i\phi_{k_2}/\eps}\cdots
  a_{k_{2\si+1}}(\tau)e^{i\phi_{k_{2\si+1}}/\eps}d\tau \, ,
\end{equation*}
where the symbol ``$\approx$'' means that left and right hand sides in the formula above
 are equal up to  $\O\(\eps\)$. Taking into account the identity
\begin{equation}\label{eq:evolphase}
  e^{i\frac{t}{2}\eps\Delta} \(e^{i\phi_j(0, x)/\eps}\) = e^{i\phi_j(t, x)/\eps},
\end{equation}
we conclude that modulo $\eps,$
\begin{align*}
 & \sum_{j\in \Z^d}a_j(t)e^{i\phi_j(t)/\eps}=  \sum_{j\in \Z^d}\alpha_je^{i\phi_j(t)/\eps}\\
& - i\sum_{k_1,k_2,\cdots,k_{2\si+1}\in
    \Z^d}  \int_0^t
  e^{i\frac{t-\tau}{2}\eps\Delta} \(a_{k_1}\bar a_{k_2}\cdots 
  a_{k_{2\si+1}}e^{i\(\sum_{\ell=1}^{2\si+1}(-1)^{1+\ell}\phi_{k_\ell}\)/\eps}\)(\tau)d\tau \, .
\end{align*}
 The aim of the next
subsection is to analyze terms of the form as in the above sum in order to
infer \eqref{eq:transport}. 

\subsection{An explicit formula and a first consequence}
\label{sec:explicit}

Given $\om\in\Z$, $j\in\Z^d$, and
$A \in L^\infty([0,T]) \equiv L^\infty([0,T], \mathbb C)$ with $T> 0,$ introduce
  \begin{equation*}
   D^\eps(t,x):= \int_0^t  e^{i\frac{t-\tau}{2}\eps\Delta} \( A(\tau) e^{ij\cdot x/\eps-i\om
      \tau/(2\eps)}\)d\tau .
  \end{equation*}
By the identity \eqref{eq:evolphase},
\begin{equation}\label{identity}
    D^\eps(t,x)= e^{ij\cdot x/\eps-i|j|^2t/(2\eps)}\int_0^t A(\tau)
e^{i(|j|^2-\om)\tau/(2\eps)}d\tau. 
  \end{equation}  

\begin{lemma}[From \cite{CDS10}, Lemma~5.6]\label{lem:duhamelT}
  Suppose that $A,\dot A\in L^\infty([0,T])$ for some $T > 0$. Then the following holds:\\
$(i)$ The function $D^\eps$ is in $C([0,T]\times\T^d)$ and
\begin{equation*}
    \lVert D^\eps\rVert_{L^\infty([0,T]\times \T^d)}\le \int_0^T |A(t)|dt.
\end{equation*}
$(ii)$ Assume in addition that $\om \not = |j|^2$. 
Then there exists a constant $C$ independent of $j$, $\om$, and $A$ such that
  \begin{equation*}
    \lVert D^\eps\rVert_{L^\infty([0,T]\times \T^d)}\le
    \frac{C\eps}{\left\lvert |j|^2-\om\right\rvert}\(
    \|A\|_{L^\infty([0,T])} +\|\dot A\|_{L^\infty([0,T])}\) . 
  \end{equation*}
\end{lemma}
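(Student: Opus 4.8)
The plan is to argue entirely from the explicit formula \eqref{identity}, which has already done the essential work: it resolves the free Schr\"odinger propagator $e^{i\frac{t-\tau}{2}\eps\Delta}$ and exhibits $D^\eps$ as a pure phase of modulus one multiplied by a scalar oscillatory integral in the single variable $\tau$. Writing $I(t):=\int_0^t A(\tau)e^{i(|j|^2-\om)\tau/(2\eps)}\dd\tau$, so that $D^\eps(t,x)=e^{ij\cdot x/\eps-i|j|^2t/(2\eps)}I(t)$, everything reduces to elementary one-dimensional estimates on $I$.

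For part $(i)$, since $\lvert e^{ij\cdot x/\eps-i|j|^2t/(2\eps)}\rvert=1$, one has $\lvert D^\eps(t,x)\rvert=\lvert I(t)\rvert\le\int_0^t\lvert A(\tau)\rvert\dd\tau\le\int_0^T\lvert A(t)\rvert\dd t$, which is the asserted bound. For the continuity statement I would observe that $t\mapsto I(t)$ is Lipschitz on $[0,T]$ (because $\lvert I(t_2)-I(t_1)\rvert\le\|A\|_{L^\infty}\lvert t_2-t_1\rvert$), while the prefactor $e^{ij\cdot x/\eps-i|j|^2t/(2\eps)}$ is continuous on $[0,T]\times\T^d$; a product of continuous functions is continuous, so $D^\eps\in C([0,T]\times\T^d)$.

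For part $(ii)$, the new ingredient is the non-stationarity of the phase: set $\theta:=(|j|^2-\om)/(2\eps)$, which is nonzero precisely because $\om\ne|j|^2$, and integrate by parts in $I(t)=\int_0^tA(\tau)e^{i\theta\tau}\dd\tau$. Since $A,\dot A\in L^\infty([0,T])$, the function $A$ is Lipschitz, hence absolutely continuous, so the integration by parts is legitimate and yields $I(t)=\frac1{i\theta}\bigl(A(t)e^{i\theta t}-A(0)\bigr)-\frac1{i\theta}\int_0^t\dot A(\tau)e^{i\theta\tau}\dd\tau$. Taking absolute values gives $\lvert I(t)\rvert\le\frac1{\lvert\theta\rvert}\bigl(2\|A\|_{L^\infty}+T\|\dot A\|_{L^\infty}\bigr)$, and since $\lvert\theta\rvert=\bigl\lvert|j|^2-\om\bigr\rvert/(2\eps)$ this becomes $\lvert D^\eps(t,x)\rvert\le\frac{2\eps}{\lvert|j|^2-\om\rvert}\bigl(2\|A\|_{L^\infty}+T\|\dot A\|_{L^\infty}\bigr)$, i.e. the claimed estimate with $C:=2\max(2,T)$.

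There is no serious obstacle here. The two points deserving a word of care are the justification of the integration by parts — handled by the remark that $A,\dot A\in L^\infty$ forces $A$ to be absolutely continuous — and the verification that $C$ is independent of $j$ and $\om$, which holds because after the integration by parts the entire $(j,\om)$-dependence sits in the explicit factor $1/\lvert\theta\rvert=2\eps/\bigl\lvert|j|^2-\om\bigr\rvert$. Finally, the identity \eqref{identity} on which the whole argument rests is itself an immediate consequence of \eqref{eq:evolphase}, as already indicated in the text.
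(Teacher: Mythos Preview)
Your proof is correct and follows exactly the approach indicated in the paper's sketch: use the explicit formula \eqref{identity}, from which item $(i)$ is immediate, and obtain item $(ii)$ by a single integration by parts in the oscillatory integral $I(t)$. You have simply supplied the details the paper omits (the absolute-continuity justification and the explicit constant), so there is nothing to add.
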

\begin{proof}[Sketch of the proof]
 Item (i) is obvious and item (ii) follows from \eqref{identity} by integrating by parts.
\end{proof}
Back to the above Duhamel's formula, we have
\begin{equation*}
 \sum _{j\in \Z^d}a_j(t)e^{i\phi_j(t)/\eps}=  \sum_{j\in
   \Z^d}\alpha_je^{i\phi_j(t)/\eps}\\ 
 - i\sum_{j\in \Z^d}
  e^{i\phi_j(t)/\eps} E_j(t)
\end{equation*}
where 
\begin{equation*}
E_j(t) := \sum_{{k_1,k_2,\cdots,k_{2\si+1}\in
    \Z^d}\atop{k_1-k_2+\dots+k_{2\si+1}=j}} \int_0^t
  \(a_{k_1}\bar a_{k_2}\dots 
  a_{k_{2\si+1}}\)(\tau)
e^{i\(|j|^2-\sum_{\ell=1}^{2\si+1}(-1)^{1+\ell}|k_\ell|^2\)\tau/(2\eps)}d\tau. 
\end{equation*}
By item (ii) of Lemma~\ref{lem:duhamelT}, all
non-resonant terms yield a contribution of order  $\O\(\eps\)$, hence
are discarded in \eqref{eq:transport}. 

\subsection{Refined ansatz}
\label{sec:second}

 In the cubic one-dimensional case, we will need to go one step further in the asymptotic description of
the solution $u^\eps$. To simplify
notations we therefore restrict the presentation to the cubic defocusing NLS equation
with $d=1$,
\begin{equation}
  \label{eq:nls1Dcubic}
  i\eps\d_t u^\eps+\frac{\eps^2}{2}\d_x^2 u^\eps =
  \eps|u^\eps|^2u^\eps,\quad x\in \T. 
\end{equation}
For initial data as in \eqref{eq:ci}, we construct an
approximate solution of the form
\begin{equation}\label{eq:approx2}
  u_{\rm app}^\eps (t,x) = \sum_{j\in \Z^d}\(a_j(t)+\eps b_j(t)\) e^{i\phi_j(t,x)/\eps},
\end{equation}
introducing terms of order $\eps$ in the amplitude. It turns out that for our applications, 
we may assume that $b_j(0)=0$
for all $j$. Following the procedure of the previous section, we get, 
using again formula \eqref{identity},
\begin{equation}\label{formula:b1}
  b_j(t) = -i\sum_{(k,\ell,m)\in \mathrm{Res}_j} \int_0^t
  \(a_k\bar a_\ell b_m+a_k\bar b_\ell a_m+b_k\bar a_\ell a_m\)(\tau)d\tau
\end{equation}
\begin{equation}\label{formula:b2}
-\frac{i}{\eps}\sum_{{k-\ell+m=j}\atop{k^2-\ell^2+m^2\not = j^2}} \int_0^t
  \(a_k\bar a_\ell a_m\)(\tau)e^{i\(j^2-k^2+\ell^2-m^2\)\tau/(2\eps)}d\tau \, .
\end{equation}
Note that the despite the prefactor $\frac{i}{\eps}$, the latter term is in fact of order 
$ \O\(\eps^0\)$
since each of the summands is non-resonant and hence can be integrated by parts
(cf. item (ii) of Lemma~\ref{lem:duhamelT}).
To be consistent, the above expression for $b_j(t)$ should be considered modulo $\O(\eps)$,
but we may choose to keep some terms of order $\eps$ for convenience.
In this case, $b_j(t)$ might depend on $\eps$ and we therefore write $b_j^\eps(t)$ 
instead of $b_j(t)$.
To give a precise definition of $b_j^\eps (t),$ let us analyze the above expression for $b_j$ in more detail.
 Let $A:=a_k \bar a_\ell a_m$ and assume that $A,\dot A,\ddot A\in
L^\infty([0,T])$ for some $T > 0$. Furthermore assume that $\delta_{j,k,\ell,m} :=
j^2-k^2+\ell^2-m^2\in\Z\setminus\{0\}$. Then integrating by parts, one obtains
(cf. item (ii) of Lemma~\ref{lem:duhamelT})
\begin{align*}
  \frac{i}{\eps}\int_0^t
  A(\tau)e^{i\(j^2-k^2+\ell^2-m^2\)\tau/(2\eps)}d\tau &=
  \frac{2}{\delta_{j,k,\ell,m}} \(A(t)
  e^{i\(j^2-k^2+\ell^2-m^2\)t/(2\eps)}-A(0)\)\\
&\quad -
  \frac{2}{\delta_{j,k,\ell,m}} \int_0^t\dot
  A(\tau)e^{i\(j^2-k^2+\ell^2-m^2\)\tau/(2\eps)}d\tau \, .
\end{align*}
As by assumption, $\ddot A\in
L^\infty([0,T]),$ the latter term can be integrated by parts 
once more and is hence
of order $\O\(\eps\).$
Taking into account the assumption $b_j(0)=0$, we define $b_j^\eps$ as follows:
\begin{equation}\label{eq:b}
   b_j^\eps (t) = -i\sum_{(k,\ell,m)\in \mathrm{Res}_j} \int_0^t
  \(a_k\bar a_\ell b^\eps_m+a_k\bar b^\eps_\ell a_m+b^\eps_k\bar a_\ell
                               a_m\)(\tau)d\tau
\end{equation}
\begin{equation*}
-\sum_{{k-\ell+m=j}\atop{k^2-\ell^2+m^2\not = j^2}}
  \frac{2}{j^2-k^2+\ell^2-m^2} \( \(a_k \bar a_\ell a_m\)(t)
  e^{i\(j^2-k^2+\ell^2-m^2\)\frac{t}{2\eps}}-\alpha_k\bar \alpha_\ell
  \alpha_m\). 
\end{equation*}
Note that \eqref {eq:b} is a linear system for the coefficients $b_j^\eps$. 
They might indeed depend on $\eps$
through the inhomogeneity given 
by the latter term. We also note that the expression
$ -i\sum_{(k,\ell,m)\in \mathrm{Res}_j} \int_0^t  
\(a_k\bar a_\ell b^\eps_m+a_k\bar b^\eps_\ell a_m+b^\eps_k\bar a_\ell a_m\)(\tau)d\tau$  
may have the effect of coupling the $b_j^\eps$'s. We will
make explicit computations on a simple example in Subsection~\ref{case:cubic1d}. 

\section{Geometrical optics: justification of the approximation}
\label{sec:wnlgo1}

\subsection{Functional setting}
\label{sec:functional}

As in \cite{CDS10} (and following successively \cite{JMRWiener} and
\cite{CoLa09}, in the context of geometrical optics for hyperbolic equations),
we choose to work in the Wiener algebra.
\begin{definition}[Wiener algebra] The Wiener algebra consists of
  functions of the form
  \begin{equation*}
    f(y) = \sum_{j\in \Z^d}\alpha _j e^{ij\cdot y},\quad \alpha _j\in \C
  \end{equation*}
with
$(\alpha _j)_{j\in \Z^d} \in \ell^1(\Z^d) $. 
It is endowed with the norm
\begin{equation*}
  \|f\|_W= \sum_{j\in \Z^d}|\alpha_j|. 
\end{equation*}
\end{definition}
Note that $W=\F L^{0,1}(\T^d)$. 
The following properties of $W$ are discussed in \cite{CDS10}:
\begin{lemma}\label{lem:wienerschrodinger}
$(i)$ For $f$ in $W$ and $\eps \, (=1/N,$ $N \in \mathbb N,)$
one has $f(\cdot/\eps)\in W$ and 
$$
\|f(\cdot/\eps)\|_W = \|f\|_W.
$$
$(ii)$ $W$ is a Banach space and continuously embeds into
$L^\infty(\T^d)$.\\
$(iii)$  $W$ is an algebra and 
  \begin{equation*}
    \|fg\|_W\le \|f\|_W\|g\|_W \quad \forall f,g\in W \, .
  \end{equation*}
$(iv)$ If $F : \C\to\C$ maps $u$ to a finite sum of terms of the form
$u^{p}\overline u^q$, $p,q\in \N$, then it extends to a map from
$W$ into itself which is Lipschitz on bounded subsets of $W$. \\
$(v)$ For any $t\in \R$, the operator $e^{i\frac{t}{2}\eps\Delta}$ is unitary on $W$. 
\end{lemma}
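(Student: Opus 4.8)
The plan is to reduce every one of the five assertions to the isometric identification of $W$ with the sequence space $\ell^1(\Z^d)$ given by $f = \sum_{j\in\Z^d}\alpha_j e^{ij\cdot y}\mapsto (\alpha_j)_{j\in\Z^d}$, under which $\|\cdot\|_W$ becomes the $\ell^1$-norm; each point then becomes an elementary statement about $\ell^1$.

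For $(i)$ I would use that $\eps=1/N$, so the dilation $y\mapsto y/\eps$ sends $e^{ij\cdot y}$ to $e^{i(Nj)\cdot y}$, and $j\mapsto Nj$ is an injection of $\Z^d$ into itself; hence $f(\cdot/\eps)$ has Fourier coefficients equal to $(\alpha_j)$ relabelled by this injection (and zero off its image), so $f(\cdot/\eps)\in W$ with $\|f(\cdot/\eps)\|_W=\sum_j|\alpha_j|=\|f\|_W$. For $(ii)$, completeness of $W$ is just completeness of $\ell^1(\Z^d)$, and the bound $|f(y)|\le\sum_j|\alpha_j|=\|f\|_W$, valid for every $y\in\T^d$, gives the continuous embedding $\|f\|_{L^\infty(\T^d)}\le\|f\|_W$. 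For $(iii)$, the Fourier coefficients of $fg$ are the convolution of those of $f$ and $g$, and Young's inequality $\|\alpha*\beta\|_{\ell^1}\le\|\alpha\|_{\ell^1}\|\beta\|_{\ell^1}$ yields $fg\in W$ and $\|fg\|_W\le\|f\|_W\|g\|_W$. For $(v)$, since $e^{i\frac t2\eps\Delta}e^{ij\cdot y}=e^{-i\frac t2\eps|j|^2}e^{ij\cdot y}$, the operator multiplies the $j$-th Fourier coefficient by a unimodular factor, hence preserves $\|\cdot\|_W$; being invertible with inverse $e^{-i\frac t2\eps\Delta}$, it is unitary on $W$.

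The only point requiring a small argument is $(iv)$. Complex conjugation sends $\sum_j\alpha_j e^{ij\cdot y}$ to $\sum_j\bar\alpha_j e^{-ij\cdot y}$, so $\bar u\in W$ with $\|\bar u\|_W=\|u\|_W$; together with $(iii)$ this gives $u^p\bar u^q\in W$, $\|u^p\bar u^q\|_W\le\|u\|_W^{p+q}$, and the triangle inequality handles finite sums, so $F$ maps $W$ into $W$. For the Lipschitz bound on the ball $\{\|u\|_W\le R\}$ I would, for each monomial, use the algebraic identity $u^p\bar u^q-v^p\bar v^q=(u^p-v^p)\bar u^q+v^p(\bar u^q-\bar v^q)$ together with $u^p-v^p=(u-v)\sum_{i=0}^{p-1}u^iv^{p-1-i}$ (and its conjugate counterpart); by $(iii)$ every factor other than $u-v$ or $\bar u-\bar v$ has $W$-norm at most a constant multiple of $R^{p+q-1}$, which yields $\|F(u)-F(v)\|_W\le C(R)\|u-v\|_W$ with $C(R)$ depending only on $R$, the degrees, and the number of monomials. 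I do not expect any genuine obstacle: the whole lemma is a transcription of standard $\ell^1$ facts into the Fourier-series picture, and the main (minor) point to get right is organizing the telescoping factorization in $(iv)$ so that all the intervening $W$-norms are controlled uniformly on bounded sets.
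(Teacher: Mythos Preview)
Your proof is correct and complete. The paper itself does not prove this lemma at all: it simply states the result and refers the reader to \cite{CDS10}, so your argument actually supplies more than the paper does, spelling out the identification of $W$ with $\ell^1(\Z^d)$ and deriving each item from standard $\ell^1$ facts.
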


\subsection{Existence results}
\label{sec:existence}

It turns out that the Wiener algebra is very well suited for
 constructing both exact and approximate solutions
of  \eqref{eq:nlssemi}--\eqref{eq:ci}  and for proving error estimates. 
By \cite[Proposition~5.8]{CDS10}, one has the following results:
\begin{proposition}\label{prop:existu}
  Let $\si,d\ge 1$ be integers. Then for any $u^\eps_{0}\in W$, there
  exists $T^\eps>0$ so that \eqref{eq:nlssemi} admits a unique solution $u^\eps\in C([0,T^\eps];W)$
   satisfying $u^\eps_{\mid t=0}=u^\eps_0$. 
\end{proposition}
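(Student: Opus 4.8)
The plan is to solve the Duhamel equation \eqref{eq:duhamel} by a contraction mapping argument in the Banach space $X_T := C([0,T];W)$ equipped with the norm $\|u\|_{X_T} := \sup_{t\in[0,T]}\|u(t)\|_W$. I would introduce the map
\[
\Phi(u)(t) := e^{i\frac{t}{2}\eps\Delta}u^\eps_0 - i\int_0^t e^{i\frac{t-\tau}{2}\eps\Delta}\bigl(|u|^{2\si}u\bigr)(\tau)\,d\tau
\]
and first check that it maps $X_T$ into itself. By item $(v)$ of Lemma~\ref{lem:wienerschrodinger}, $e^{i\frac{t}{2}\eps\Delta}$ is unitary on $W$; by item $(iv)$ the map $u\mapsto|u|^{2\si}u = u^{\si+1}\overline u^{\si}$ sends $W$ into itself, and combined with the Banach algebra property, item $(iii)$, this yields $\||u|^{2\si}u\|_W\le\|u\|_W^{2\si+1}$. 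Continuity in $t$ of $t\mapsto e^{i\frac{t}{2}\eps\Delta}u^\eps_0$ follows from dominated convergence applied to $\sum_{j}|\alpha_j|\,\bigl|e^{-i\eps|j|^2t/2}-e^{-i\eps|j|^2t_0/2}\bigr|$, and continuity of the Duhamel integral term is then routine.

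Next I would carry out the fixed-point estimates on the ball $B_R := \{u\in X_T : \|u\|_{X_T}\le R\}$ with $R := 2\|u^\eps_0\|_W$. Unitarity and the algebra bound give
\[
\|\Phi(u)(t)\|_W \le \|u^\eps_0\|_W + \int_0^t\|u(\tau)\|_W^{2\si+1}\,d\tau \le \|u^\eps_0\|_W + T R^{2\si+1},
\]
so $\Phi(B_R)\subset B_R$ once $TR^{2\si+1}\le\|u^\eps_0\|_W$. For the contraction, item $(iv)$ provides a Lipschitz constant $L_R$ for $u\mapsto|u|^{2\si}u$ on $B_R$ (equivalently, expanding the difference of monomials into $2\si+1$ telescoping terms and applying item $(iii)$ gives $L_R = C_\si R^{2\si}$), whence $\|\Phi(u)-\Phi(v)\|_{X_T}\le T L_R\|u-v\|_{X_T}$, a strict contraction for $T$ small. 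Choosing $T^\eps>0$ meeting both smallness conditions --- which can in fact be taken independent of $\eps$, since the unitarity and algebra estimates do not involve $\eps$ --- the Banach fixed point theorem produces a unique solution $u^\eps\in B_R\subset C([0,T^\eps];W)$ of \eqref{eq:duhamel}, hence of \eqref{eq:nlssemi} with $u^\eps_{\mid t=0}=u^\eps_0$.

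For uniqueness in the whole space $C([0,T^\eps];W)$, rather than just within $B_R$, I would take two solutions $u,v$, use that each is bounded on $[0,T^\eps]$, and run a Gr\"onwall argument on the Duhamel difference to conclude $u\equiv v$. The only point deserving attention is that the equation is genuinely semilinear with no gain of regularity from dispersion: all estimates must live in $W$ itself. But since $W=\F L^{0,1}(\T^d)$ is a Banach algebra on which the Schr\"odinger group acts isometrically, this is exactly what Lemma~\ref{lem:wienerschrodinger} supplies, so there is no real obstacle --- the proof is a direct assembly of those properties.
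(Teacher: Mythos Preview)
Your argument is correct and is the standard contraction-mapping proof in the Wiener algebra, assembled precisely from the properties recorded in Lemma~\ref{lem:wienerschrodinger}. Note that the paper does not actually supply its own proof of this proposition: it simply cites \cite[Proposition~5.8]{CDS10}, where the very same fixed-point scheme is carried out; your write-up is essentially what one finds there. Your observation that $T^\eps$ can be taken to depend only on $\|u^\eps_0\|_W$ (hence uniformly in $\eps$ for the initial data~\eqref{eq:ci}) is also correct and is used implicitly later in the paper.
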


An existence result for the resonant system \eqref{eq:transport} is
given in \cite[Proposition~5.12]{CDS10}. In \cite[Lemma 2.3]{CaFa12},
extra regularity properties are established in the cubic case
$\si=1$ which can be readily proved to extend to higher order
nonlinearities, yielding the following proposition. 
\begin{proposition}\label{prop:exista}
  Let $\si\ge 1$ be an integer and $(\alpha_j)_{j\in \Z^d}\in
  \ell^1(\Z^d)$. Then there exists $T>0$ so that  \eqref{eq:transport}
 admits  a unique solution $(a_j)_{j\in \Z^d}\in C^\infty([0,T]; \ell^1(\Z^d))$. 
\end{proposition}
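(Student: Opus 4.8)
The plan is to prove Proposition~\ref{prop:exista} by a Banach fixed-point argument in $C([0,T];\ell^1(\Z^d))$, and then bootstrap the regularity in $t$ from the integral equation. First I would rewrite \eqref{eq:transport} in its Duhamel (integral) form
\begin{equation*}
  a_j(t) = \alpha_j - i\int_0^t \sum_{(k_1,\dots,k_{2\si+1})\in \mathrm{Res}_j} \(a_{k_1}\bar a_{k_2}\cdots a_{k_{2\si+1}}\)(\tau)\,d\tau,
\end{equation*}
and define the map $\Phi$ sending a sequence-valued function $(a_j(\cdot))_j$ to the right-hand side. The key observation is that the nonlinearity, once one drops the resonance constraint (which only removes terms), is dominated in $\ell^1$ by the full convolution product: writing $N_j\big((a_j)_j\big)$ for the inner sum, one has $\sum_j |N_j| \le \big(\sum_j |a_j|\big)^{2\si+1} = \|(a_j)_j\|_{\ell^1}^{2\si+1}$, because the constraint $k_1-k_2+\cdots+k_{2\si+1}=j$ is exactly what a $(2\si+1)$-fold convolution encodes, and the second resonance constraint $\sum (-1)^{\ell+1}|k_\ell|^2=|j|^2$ only throws terms away. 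This is precisely the statement that $W$ is a Banach algebra (Lemma~\ref{lem:wienerschrodinger}(iii)(iv)), applied to $F(u)=|u|^{2\si}u$. Hence $\Phi$ maps the ball of radius $R:=2\|(\alpha_j)_j\|_{\ell^1}$ in $C([0,T];\ell^1)$ into itself and is a contraction there, provided $T\le T(R)$ is small enough, giving existence and uniqueness of $(a_j)_j\in C([0,T];\ell^1(\Z^d))$.

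Next I would upgrade this to $C^1$, then inductively to $C^\infty$. Since $t\mapsto (a_j(t))_j$ is continuous into $\ell^1$ and the nonlinearity is (locally) Lipschitz on bounded subsets of $\ell^1$ by the algebra property, the integrand $\tau\mapsto (N_j((a_k(\tau))_k))_j$ is continuous into $\ell^1$; by the fundamental theorem of calculus (valued in the Banach space $\ell^1$) the integral equation then shows $(a_j)_j\in C^1([0,T];\ell^1)$ with $\dot a_j = -iN_j$. Differentiating the ODE system repeatedly, one sees $\frac{d^n}{dt^n}(a_j)_j$ is a finite sum of $(2\si+1)$-linear convolution-type expressions in the $a_k$ and their derivatives of order $<n$; each such expression is, by the same algebra estimate, continuous (in fact differentiable) into $\ell^1$ with quantitative bounds. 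By induction on $n$ one obtains $(a_j)_j\in C^n([0,T];\ell^1)$ for every $n$, hence $C^\infty$. Formally this is the content cited from \cite[Lemma~2.3]{CaFa12} for $\si=1$; I would simply remark that the argument uses nothing about the cubic case beyond the Banach-algebra structure of $\ell^1$ under this convolution, and so extends verbatim to general $\si$.

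The main obstacle — and it is a mild one — is making the regularity bootstrap clean: one must verify that differentiating under the integral sign is justified (which follows once the integrand is known to be $C^0$, indeed $C^\infty$, into $\ell^1$) and that the resonance constraint causes no trouble when differentiating, which is clear since $\mathrm{Res}_j$ is independent of $t$. One should also note the time of existence $T$ is uniform in the sense that it depends only on $\|(\alpha_j)_j\|_{\ell^1}$, not on finer structure of the data; this uniformity (and the fact that $C^\infty$ regularity in $t$ comes with it) is what will be needed later when the $a_j$ feed into the error estimates and into the refined system \eqref{eq:b} for $b_j^\eps$. No genuinely hard analysis is required here — the proposition is essentially a packaging of the Wiener-algebra estimates of Lemma~\ref{lem:wienerschrodinger} together with a standard Picard iteration and a routine induction.
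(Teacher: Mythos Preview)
Your proposal is correct and follows exactly the approach the paper has in mind: the paper does not spell out a proof but refers to \cite[Proposition~5.12]{CDS10} for existence in $C([0,T];\ell^1)$ and to \cite[Lemma~2.3]{CaFa12} for the $C^\infty$-in-time regularity, remarking that the latter ``can be readily proved to extend to higher order nonlinearities''---which is precisely your Picard iteration in $\ell^1$ combined with the observation that the resonant sum is dominated by the full convolution, followed by the obvious bootstrap. There is nothing to add.
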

Note that $(a_j)_{j\in \Z^d}$ needs to be in $ C^2([0,T]; \ell^1(\Z^d))$ in
order to justify in the analysis of the previous subsection that $u_{\rm
  app}^\eps$ solves Duhamel's formula associated to
\eqref{eq:nls1Dcubic} up to  $\O(\eps^2)$. 
For the  linear system \eqref{eq:b}, the following result holds:
\begin{lemma}\label{lemma:existence}
  Let $T>0$ and $(a_j)_{j\in \Z^d}\in C([0,T]; \ell^1(\Z^d))$. Then
  \eqref{eq:b} has a unique solution  $(b_j^\eps)_{j\in \Z^d}\in C([0,T];
  \ell^1(\Z^d))$. In addition, $\|b_j^\eps\|_{L^\infty([0,T];\ell^1)}$
  is bounded  uniformly in $\eps\in (0,1]$. 
\end{lemma}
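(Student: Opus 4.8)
The plan is to view \eqref{eq:b} as an inhomogeneous linear Volterra integral equation in the Banach space $\ell^1(\Z^d)$ and to solve it by a standard fixed-point / Picard iteration argument, then extract the $\eps$-uniform bound by keeping track of constants. First I would set up notation: write $b^\eps = (b_j^\eps)_{j\in\Z^d}$ and rewrite \eqref{eq:b} as $b^\eps(t) = F^\eps(t) + \int_0^t \mathcal{L}(\tau) b^\eps(\tau)\,d\tau$, where $\mathcal{L}(\tau)$ is the time-dependent bounded linear operator on $\ell^1(\Z^d)$ coming from the resonant trilinear terms $-i\sum_{(k,\ell,m)\in\mathrm{Res}_j}(a_k\bar a_\ell\,\cdot\, + a_k\,\overline{\cdot}\,a_m + \cdot\,\bar a_\ell a_m)$, and $F^\eps(t)$ is the inhomogeneity given by the second (non-resonant) sum in \eqref{eq:b}.

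The two analytic facts to check are: (a) $\mathcal{L}(\tau)$ is a bounded operator on $\ell^1(\Z^d)$ with $\|\mathcal{L}(\tau)\|_{\mathcal{L}(\ell^1)} \le C\|(a_j(\tau))\|_{\ell^1}^{2\si}$ (here for the cubic case $2\si=2$), uniformly for $\tau\in[0,T]$ and \emph{independent of $\eps$}; and (b) $F^\eps \in C([0,T];\ell^1(\Z^d))$ with $\sup_{\eps\in(0,1]}\|F^\eps\|_{L^\infty([0,T];\ell^1)} <\infty$. For (a), the key is the same convolution-type estimate that underlies Lemma~\ref{lem:wienerschrodinger}(iii)--(iv): summing $|b_j^\eps|$ over $j$ and using that each constraint $k-\ell+m=j$ lets one of the indices range freely while the others are summed against $(a_j)\in\ell^1$, so $\sum_j \big|\sum_{(k,\ell,m)\in\mathrm{Res}_j} a_k\bar a_\ell b_m\big| \le \|a\|_{\ell^1}^2\|b\|_{\ell^1}$ and similarly for the other two terms. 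For (b), the bound $\big|\tfrac{2}{j^2-k^2+\ell^2-m^2}\big|\le 2$ on the nonzero integer denominators together with the Wiener-algebra product estimate gives $\|F^\eps(t)\|_{\ell^1}\le C\big(\|a(t)\|_{\ell^1}^3 + \|a(0)\|_{\ell^1}^3\big)$, with no $\eps$ anywhere in this bound — the oscillatory factor $e^{i(j^2-k^2+\ell^2-m^2)t/(2\eps)}$ has modulus one and is harmless for the $\ell^1$ estimate. Continuity in $t$ of $F^\eps$ follows from continuity of $t\mapsto a(t)\in\ell^1$ (given) and dominated convergence in $\ell^1$.

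Granting (a) and (b), existence and uniqueness of $b^\eps\in C([0,T];\ell^1)$ is the textbook linear Volterra argument: the iteration $b^{(0)}=F^\eps$, $b^{(n+1)}(t) = F^\eps(t)+\int_0^t \mathcal{L}(\tau)b^{(n)}(\tau)\,d\tau$ is Cauchy in $C([0,T];\ell^1)$ because the $n$-th iterate of the Volterra operator has norm $\le (CT)^n/n!$ with $C = \sup_{[0,T]}\|\mathcal{L}(\tau)\|$, so the series converges on the \emph{whole} interval $[0,T]$ (no need to shrink $T$), and the limit solves \eqref{eq:b}; uniqueness follows from Gr\"onwall. Finally, the same Gr\"onwall estimate applied to the solution itself gives
\begin{equation*}
  \|b^\eps\|_{L^\infty([0,T];\ell^1)} \le \|F^\eps\|_{L^\infty([0,T];\ell^1)}\, e^{CT},
\end{equation*}
and since the right-hand side is bounded uniformly in $\eps\in(0,1]$ by (b), the claimed $\eps$-uniform bound follows.

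I do not expect a serious obstacle here — the statement is essentially a bookkeeping consequence of the Wiener-algebra structure already set up. The one point requiring a little care is making transparent that \emph{no} negative power of $\eps$ sneaks in: although $b_j^\eps$ depends on $\eps$ through the phases $e^{i\delta_{j,k,\ell,m} t/(2\eps)}$ in $F^\eps$, these are unimodular, so they drop out of every $\ell^1$ bound; this is exactly why the already-performed integration by parts (turning the $\tfrac{i}{\eps}$ prefactor of \eqref{formula:b2} into the bounded coefficient $\tfrac{2}{j^2-k^2+\ell^2-m^2}$ of \eqref{eq:b}) was the essential step, and the present lemma just records its payoff.
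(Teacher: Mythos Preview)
The paper does not supply an explicit proof of this lemma; it is stated as a direct consequence of the linear Volterra structure of \eqref{eq:b} together with the $\ell^1$/Wiener-algebra estimates already in place. Your argument fills in exactly the details the paper leaves implicit and is correct: the convolution bound for the resonant trilinear terms, the observation that the non-resonant denominators are nonzero integers so that $|2/\delta_{j,k,\ell,m}|\le 2$ and the unimodular phases contribute nothing to the $\ell^1$ norm, and the Picard/Gr\"onwall conclusion are all sound. One cosmetic remark: because of the middle term $a_k\bar b^\eps_\ell a_m$, your operator $\mathcal{L}(\tau)$ is only $\R$-linear, not $\C$-linear, but this is irrelevant for the contraction and Gr\"onwall steps.
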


\subsection{Error estimates}
\label{sec:error}

In the case of the first order expansion presented in Subsection~\ref{sec:framework}, 
the approximate solution $u^\eps_{\rm app},$ defined by 
Proposition~\ref{prop:exista} on an interval $[0, T]$, satisfies
\begin{equation*}
  i\eps\d_t u^\eps_{\rm app}+\frac{\eps^2}{2}\Delta u^\eps_{\rm app} =
  \eps|u^\eps_{\rm app}|^{2\si} u^\eps_{\rm app} -\eps r^\eps,\quad
  u^\eps_{{\rm app}\mid t=0}=u^\eps_{\mid t=0} 
\end{equation*}
where the term $ r^\eps \equiv r^\eps(t, x)$ is given by
\begin{equation*} 
  r^\eps = \sum_{j\in \Z^d}\sum_{{k_1-k_2+\cdots +{k_{2\si+1}=j}\atop
      {|k_1|}^2-|k_2|^2+\cdots +|k_{2\si+1}|^2\not =|j|^2}}a_{k_1}\bar
  a_{k_2}\cdots a_{k_{2\si+1}}e^{i(\phi_{k_1}-\phi_{k_2}+\dots + \phi_{k_{2\si+1}})/\eps} \, .
\end{equation*}
Since the
$k_\ell$'s are integer vectors and hence there are no issues of small nonzero divisors, the
integrated source term
\begin{equation*}
  R^\eps(t,x) = \int_0^t e^{i\frac{t-\tau}{2}\eps\Delta} r^\eps(\tau,x)d\tau
\end{equation*}
can be estimated in view of item (ii) of Lemma~\ref{lem:duhamelT} by
\begin{equation*}
  \|R^\eps\|_{L^\infty([0,T];W)}\le C\eps
\end{equation*}
where the constant $C$ is independent of $\eps$. 
\smallbreak

In the case of the second order expansion presented in  Subsection~\ref{sec:second}
 for the cubic NLS equation on the circle ($d=1, \si=1$), one has by 
Proposition~\ref{prop:exista} and Lemma~\ref{lemma:existence} that the
approximate solution $u^\eps_{\rm app}$ is defined on the interval
$[0, T]$ with $T$ as in Proposition~\ref{prop:exista}. Hence
\begin{equation*}
  i\eps\d_t u^\eps_{\rm app}+\frac{\eps^2}{2}\d_x^2 u^\eps_{\rm app} =
  \eps|u^\eps_{\rm app}|^{2} u^\eps_{\rm app} -\eps r_b^\eps,\quad
  u^\eps_{{\rm app}\mid t=0}=u^\eps_{\mid t=0}
\end{equation*}
where $r_b^\eps\equiv r^\eps_b (t, x)$ is given by an explicit formula, similar to the one for $  r^\eps$. 
Using again item (ii) of Lemma~\ref{lem:duhamelT}, 
one shows that the integrated source term 
\begin{equation*}
  R_b^\eps(t,x) = \int_0^t e^{i\frac{t-\tau}{2}\eps\d_x^2} r_b^\eps(\tau,x)d\tau 
\end{equation*}
satisfies the estimate
\begin{equation*}
  \|R_b^\eps\|_{L^\infty([0,T];W)}\le C\eps^2
\end{equation*}
with a constant $C$ independent of $\eps$.
 In view of
Proposition~\ref{prop:existu}, a bootstrap argument applies,
yielding the following error estimate:
\begin{proposition}\label{prop:approx}
  Let $\si,d\ge 1$ be integers, $(\alpha_j)_{j\in \Z^d}$ be a sequence in
  $\ell^1(\Z^d)$, and $T$ be given as in Proposition~\ref{prop:exista} . Then there exists a constant $C>0$ independent of $\eps$
so that the following holds:\\
$(i)$ The first order approximation $u^\eps_{\rm app}$, constructed in Subsection~\ref{sec:framework}, satisfies
\begin{equation*}
  \|u^\eps-u^\eps_{\rm app}\|_{L^\infty([0,T];W)}\le C\eps \, .
\end{equation*}
$(ii)$ In the case $d=\si=1$, the second order approximation 
$u^\eps_{\rm app}$, constructed in Subsection~\ref{sec:second}, satisfies
\begin{equation*}
  \|u^\eps-u^\eps_{\rm app}\|_{L^\infty([0,T];W)}\le C\eps^2 \, .
\end{equation*}
\end{proposition}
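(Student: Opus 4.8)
The plan is to run a standard stability/bootstrap argument comparing the exact solution $u^\eps$ of \eqref{eq:nlssemi} (resp. \eqref{eq:nls1Dcubic}) with the approximate solution $u^\eps_{\rm app}$, both viewed as curves in the Wiener algebra $W$. First I would fix $T$ as in Proposition~\ref{prop:exista} and set $w^\eps := u^\eps - u^\eps_{\rm app}$. By construction $w^\eps(0) = 0$, and subtracting the two Duhamel formulas one gets
\begin{equation*}
  w^\eps(t) = -\,i\int_0^t e^{i\frac{t-\tau}{2}\eps\Delta}\Bigl(|u^\eps|^{2\si}u^\eps - |u^\eps_{\rm app}|^{2\si}u^\eps_{\rm app}\Bigr)(\tau)\,d\tau \;+\; \rho^\eps(t),
\end{equation*}
where $\rho^\eps$ is (up to a harmless constant) the integrated source term $R^\eps$ in case $(i)$ and $R^\eps_b$ in case $(ii)$. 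The estimates $\|R^\eps\|_{L^\infty([0,T];W)}\le C\eps$ and $\|R^\eps_b\|_{L^\infty([0,T];W)}\le C\eps^2$ are exactly the ones recalled just before the statement, so $\rho^\eps$ is already under control; the only thing that remains is to propagate smallness through the nonlinear Duhamel term.

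The key steps, in order, are: (1) Record that $u^\eps_{\rm app}$ is bounded in $L^\infty([0,T];W)$ uniformly in $\eps$ — this follows from Proposition~\ref{prop:exista} (and, in case $(ii)$, also Lemma~\ref{lemma:existence}) together with Lemma~\ref{lem:wienerschrodinger}$(i)$, which says the rescaling $x\mapsto x/\eps$ does not change the $W$-norm. Call this bound $M$. (2) Use Lemma~\ref{lem:wienerschrodinger}$(v)$ (unitarity of $e^{i\frac t2\eps\Delta}$ on $W$) and $(iv)$ (Lipschitz continuity of $F(u)=|u|^{2\si}u$ on bounded subsets of $W$) to estimate, on any subinterval $[0,T']\subset[0,T]$ on which $\|u^\eps(t)\|_W\le 2M$, say,
\begin{equation*}
  \|w^\eps(t)\|_W \le \|\rho^\eps\|_{L^\infty([0,T];W)} + C(M)\int_0^t \|w^\eps(\tau)\|_W\,d\tau .
\end{equation*}
(3) Apply Grönwall to obtain $\|w^\eps(t)\|_W \le \|\rho^\eps\|_{L^\infty([0,T];W)}\,e^{C(M)t}$ on that subinterval. (4) Run the bootstrap: for $\eps$ small enough, the right-hand side is $\le M$, so $\|u^\eps(t)\|_W \le \|u^\eps_{\rm app}(t)\|_W + M \le 2M$, which closes the a~priori bound and, via Proposition~\ref{prop:existu} for continuation of the exact solution, shows the maximal existence time $T^\eps$ of $u^\eps$ exceeds $T$ once $\eps$ is small. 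Feeding the bound on $\|\rho^\eps\|$ back in then yields $\|w^\eps\|_{L^\infty([0,T];W)}\le C\eps$ in case $(i)$ and $\le C\eps^2$ in case $(ii)$, with $C$ independent of $\eps$.

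The main obstacle — really the only nontrivial point — is the bootstrap/continuation argument in step (4): a~priori $u^\eps$ is only known to exist on some $\eps$-dependent interval $[0,T^\eps]$, so one must argue that $\{t\in[0,\min(T,T^\eps)] : \|u^\eps(\tau)\|_W\le 2M \ \forall\tau\le t\}$ is nonempty, closed, and open in order to conclude it equals $[0,T]$ for $\eps$ small; openness is where Proposition~\ref{prop:existu} is used to extend past $T^\eps$ whenever the $W$-norm stays bounded. Everything else — the Lipschitz bound, unitarity, Grönwall, and the source-term estimates — is routine and has already been set up in the preceding subsections. Note that for case $(ii)$ one additionally needs $(a_j)\in C^2([0,T];\ell^1)$ (guaranteed by Proposition~\ref{prop:exista}) and the boundedness of $(b_j^\eps)$ from Lemma~\ref{lemma:existence}, so that the $O(\eps^2)$ bound on $R^\eps_b$ is legitimate; with that in hand the argument is verbatim the same as in case $(i)$.
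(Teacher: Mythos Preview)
Your proposal is correct and follows exactly the approach the paper has in mind: the paper's own ``proof'' is the single sentence ``In view of Proposition~\ref{prop:existu}, a bootstrap argument applies,'' and you have accurately unpacked that into the Duhamel subtraction, the Lipschitz/unitarity estimate in $W$, Gr\"onwall, and the continuation argument. The ingredients you invoke (Proposition~\ref{prop:exista}, Lemma~\ref{lemma:existence}, Lemma~\ref{lem:wienerschrodinger}, and the source-term bounds on $R^\eps$, $R^\eps_b$) are precisely the ones the paper has set up for this purpose.
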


\section{Description of the approximate solution} 
\label{sec:approx}

\subsection{Resonant sets and the creation of modes in the cubic case}
\label{sec:resonant}

Using arguments developed in \cite{CDS10} in connection with \cite{Iturbulent},
the resonant sets $\mathrm{Res}_j$, introduced in Subsection~\ref{sec:framework},
 can be characterized in the cubic case
as follows:
\begin{proposition}\label{prop:resonant}
  Let $\si=1$ and $j\in \Z^d$.\\
$(i)$ If in addition $d=1$, then 
\begin{equation*}
  \mathrm{Res}_j=\{(j,\ell,\ell),\ (\ell,\ell,j)\ ;\ \ell\in \Z\setminus \{ j \} \} \cup \{(j, j, j)  \}.
\end{equation*}
$(ii)$ If in addition $d\ge 2$, then $(k,\ell,m)\in \mathrm{Res}_j$ if and only if either the
endpoints of the vectors $k,\ell,m,j$ are the four corners of a
nondegenerate rectangle with $\ell$ and $j$ opposing each other 
or
this quadruplet corresponds to one of the following three degenerate
cases: $(j, \ell, \ell)$ with $j \ne \ell$,  $(\ell, \ell, j)$ with $j \ne \ell$,
or $(j, j, j)$. 
\end{proposition}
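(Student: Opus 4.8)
The plan is to characterize the resonant set
\[
\mathrm{Res}_j = \Bigl\{(k,\ell,m)\in\Z^{3d}\ ;\quad k-\ell+m=j,\quad |k|^2-|\ell|^2+|m|^2=|j|^2\Bigr\}
\]
by eliminating one variable and reducing the two constraints to a single quadratic (in fact bilinear) identity. Substituting $k = j + \ell - m$ into the second equation, one gets $|j+\ell-m|^2 - |\ell|^2 + |m|^2 = |j|^2$; expanding and cancelling the terms $|j|^2, |\ell|^2, |m|^2$ leaves an identity of the form $\langle j-\ell,\ j-m\rangle = 0$ (up to a harmless sign/factor of $2$ to be double-checked when I carry out the algebra). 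Thus $(k,\ell,m)\in\mathrm{Res}_j$ if and only if $k = j+\ell-m$ and the vectors $j-\ell$ and $j-m$ are orthogonal in $\R^d$. Equivalently, writing the endpoints of $k,\ell,m,j$ as points of $\Z^d$, the displacement from $j$ to $\ell$ is perpendicular to the displacement from $j$ to $m$, and $k$ is the fourth point making $j,\ell,k,m$ (in that cyclic order) a parallelogram; orthogonality of two adjacent sides then forces it to be a rectangle, with $j$ and $\ell$ at opposite corners. This is exactly statement $(ii)$, once I separate out the degenerate subcases $\ell = j$ (forcing $m = k$, i.e.\ $(k,j,k)$ with any $k$, which in the proposition's labeling is $(\ell,\ell,j)$), $m = j$ (forcing $\ell = k$, i.e.\ $(j,\ell,\ell)$), giving the two one-parameter degenerate families, and $\ell = m = j$ giving the triple $(j,j,j)$; when $d\ge 2$ genuinely nondegenerate (nonzero, non-parallel) orthogonal vectors $j-\ell$, $j-m$ exist, producing honest rectangles.

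For item $(i)$, the point is that in dimension $d=1$ the orthogonality condition $\langle j-\ell, j-m\rangle = 0$ becomes the scalar equation $(j-\ell)(j-m) = 0$, which over $\R$ (hence over $\Z$) forces $\ell = j$ or $m = j$. Feeding these back into $k = j+\ell-m$: if $\ell = j$ then $m = k$, giving tuples $(k, j, k)$ for arbitrary $k\in\Z$, which in the proposition's notation (renaming) are the $(\ell,\ell,j)$ with $\ell\in\Z$; if $m = j$ then $\ell = k$, giving $(j,\ell,\ell)$ for arbitrary $\ell\in\Z$. The overlap of the two families is exactly $\ell = m = j$, $k = j$, i.e.\ $(j,j,j)$, which accounts for the way the statement is written as a union over $\ell\neq j$ together with the singleton $\{(j,j,j)\}$. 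So the one-dimensional case is a direct specialization and requires no separate combinatorial argument.

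Concretely, I would carry out the steps in this order: (1) perform the substitution and the expansion to derive the bilinear identity $\langle j-\ell,\ j-m\rangle = 0$, being careful with the sign conventions in the definition of $\mathrm{Res}_j$ (the $(-1)^{\ell+1}$ pattern means the middle index carries the minus sign, so $k_1 = k$, $k_2 = \ell$, $k_3 = m$); (2) reinterpret this geometrically, checking that $k = j+\ell-m$ plus perpendicularity of adjacent sides is equivalent to ``$j,\ell,k,m$ form a rectangle with $j,\ell$ opposite''; (3) enumerate the degenerate cases where one or both of $j-\ell$, $j-m$ vanishes and match them to the three listed tuples; (4) specialize to $d=1$ to get item $(i)$. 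I do not anticipate a genuine obstacle here — the argument is elementary linear algebra over $\Z$ — but the one place demanding care is the bookkeeping of indices and signs in the expansion, and making sure the degenerate boundary cases are neither double-counted nor omitted when passing between the ``$\langle\cdot,\cdot\rangle=0$'' formulation and the explicit list in the statement. The connection to \cite{CDS10} and \cite{Iturbulent} is only that this rectangle characterization is classical for the cubic resonant system; the proof itself is self-contained.
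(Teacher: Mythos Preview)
Your approach is correct and is precisely the standard argument (the paper itself gives no proof, merely citing \cite{CDS10} and \cite{Iturbulent} where this rectangle characterization appears). One correction you will discover when you carry out step~(1): after substituting $k=j+\ell-m$ (or equivalently $\ell=k+m-j$) and expanding, the bilinear identity that emerges is
\[
\langle k-j,\ m-j\rangle = 0,
\]
not $\langle j-\ell,\ j-m\rangle = 0$. The point is that $k+m=j+\ell$ makes $j$ and $\ell$ the \emph{diagonal} pair of the parallelogram, so the sides meeting at the vertex $j$ go to $k$ and to $m$, and it is those two displacements that must be orthogonal. This also repairs your degenerate-case bookkeeping: the degenerate subcases are $k=j$ (forcing $m=\ell$, giving the tuple $(j,\ell,\ell)$) and $m=j$ (forcing $k=\ell$, giving $(\ell,\ell,j)$), which matches the statement directly. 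With your version $\langle j-\ell,j-m\rangle=0$, the claim ``$\ell=j$ forces $m=k$'' does not actually follow from the linear constraint $k-\ell+m=j$ (you would only get $k+m=2j$), so the degenerate families as you wrote them are off. Once the correct pair of vectors is used, everything else --- including the $d=1$ specialization, where the scalar identity $(k-j)(m-j)=0$ forces $k=j$ or $m=j$ --- goes through exactly as you outlined.
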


By item (i) of Proposition~\ref{prop:resonant}, we see that in the case
$d=\si=1$, \eqref{eq:transport} becomes
\begin{equation}\label{eq:transport1D}
  i\dot a_j = \(2\sum_{k\in \Z} |a_k|^2-|a_j|^2\) a_j,\quad
  a_j(0)=\alpha_j.
\end{equation}
It then follows that for any $j \in \mathbb Z$, 
$\frac{d}{dt}(|a_j|^2) = 0$ and hence
\begin{equation}\label{solution:transport1D}
  a_j(t)=\alpha_j\exp\(-i\(2\sum_{k\in \Z} |\alpha_k|^2-|\alpha_j|^2\)t\) 
\end{equation}
In particular, if initially the j-mode vanishes, $\alpha_j e^{\phi_j (0,\cdot)/\eps} = 0$, then
$a_j(t)=0$ for any $t>0$. The situation is different
in higher dimensions. The example considered in \cite{CDS12} also
plays an important role here: for $d\ge 2$, let
\begin{equation}\label{example1}
  u^\eps(0,x) = e^{ix_1/\eps} + e^{ix_2/\eps}+e^{i(x_1+x_2)/\eps}.
\end{equation}
Let 
$k := (0,1,0_{\Z^{d-2}}),$ $\ell := (1,1,0_{\Z^{d-2}}),$ and 
$m := (0,1,0_{\Z^{d-2}}).$ 
Then $(k, \ell, m)$ is in $Res_0$ and the initial data 
can be written as $u^\eps(0,x) = 
e^{ix\cdot k/\eps} + e^{ix\cdot m /\eps}+e^{ix\cdot \ell/\eps}.$
The zero mode $a_0(t)$ then becomes instantaneously nonzero for $t > 0$ since by
 \eqref{eq:transport},
\begin{equation*}
  i\dot a_{0\mid t=0}= 2 \alpha_k \alpha_ \ell \alpha_m =2.
\end{equation*}
In such a case we say that the zero mode is created by resonant interaction
of nonzero modes.
Furthermore, by item (ii) of Proposition~\ref{prop:resonant}, 
no other modes are created. 

\subsection{Creation of modes for higher order nonlinearities}
\label{sec:creation}

The key idea to prove Theorem~\ref{theo:main} is to choose initial data,
causing instantaneous transfer of energy from nonzero modes to the zero mode. 
 In the previous subsection we provided an example for such 
initial data in the cubic multidimensional case ($\si = 1, d \ge 2$). It turns out that 
for $d \ge 2$, a similar example also works for higher order
nonlinearities, based on the following observation: if in the case $\si=1$, 
one has $(k,\ell,m)\in \mathrm{Res}_j$, then for
any $\si\ge 2$
\begin{equation*}
  (k,\ell,m,\underbrace{k,\cdots,k}_{2\si-2\text{ times}}),
 (k,\ell,m,\underbrace{\ell,\cdots,\ell}_{2\si-2\text{ times}}),
 (k,\ell,m,\underbrace{m,\cdots,m}_{2\si-2\text{ times}})\in \mathrm{Res}_j. 
\end{equation*}
For proving Theorem~\ref{theo:main}, it therefore remains to consider 
the case $\si \ge 2$ in the one-dimensional case. In view of the above
observation, it suffices  to treat the case of the quintic nonlinearity ($\si=2$).
\smallbreak

For $d=1$ and $\si=2$, the zero mode is created by resonant interaction
of nonzero modes if we can find $k_1,k_2,k_3,k_4,k_5\in
\Z\setminus\{0\}$ such that
\begin{equation*}
\left\{
\begin{aligned}
  &k_1-k_2+k_3-k_4+k_5=0,\\
&k_1^2-k_2^2+k_3^2-k_4^2+k_5^2=0. 
\end{aligned}
\right.
\end{equation*}
Squaring the first identity, written as $k_1+k_3+k_5=k_2+k_4$, and
using the second identity, this system is equivalent to 
\begin{equation*}
\left\{
\begin{aligned}
  &k_1+k_3+k_5=k_2+k_4,\\
&k_1k_3+k_1k_5+k_3k_5=k_2k_4. 
\end{aligned}
\right.
\end{equation*}
Assume that $k_1,k_3,k_5$ are given. Then $k_2$ and $k_4$
are the zeroes of the quadratic polynomial
\begin{equation*}
  X^2 -(k_1+k_3+k_5)X+ k_1k_3+k_1k_5+k_3k_5=0 \, ,
\end{equation*}
whose discriminant is 
\begin{align*}
  \Delta &= (k_1+k_3+k_5)^2-4 \(k_1k_3+k_1k_5+k_3k_5\)\\
& =
  k_1^2+k_3^2+k_5^2 - 2k_1k_3-2k_1k_5-2k_3k_5 \, .
\end{align*}
Assuming that $k_2$ and $ k_4$ are listed in increasing order,
 they are then given by 
\begin{equation*}
  k_2 = \frac{k_1+k_3+k_5-\sqrt\Delta}{2},\quad k_4 =
  \frac{k_1+k_3+k_5+\sqrt\Delta}{2}. 
\end{equation*}
In particular, $\Delta$ must be of the form $\Delta=N^2$ with $N$ an
integer, having the same parity as $k_1+k_3+k_5$. One readily sees
that $k_1,$ $k_3$, and $k_5$ cannot be all equal. Furthermore, one can
construct infinitely many solutions of the form
\begin{equation*}
  (k_1,k_3,k_5)=(a,-a,b), \quad a,b\not =0,\quad b\not\in\{a,-a\}. 
\end{equation*}
Indeed, for $ (k_1,k_3,k_5)$ of this form,  $\Delta = b^2+4a^2$. 
Hence we look for integer solutions of
\begin{equation*}
  b^2+(2a)^2=N^2, 
\end{equation*}
meaning that $(b,2a,N)$ must be a Pythagorean triplet. We infer:
\begin{lemma}\label{lem:quintic}
  For any $p,q\in \Z$ with $p,q\not =0$ and $p\not = q$,  the $5$-tuple
  \begin{equation*}
    (k_1,k_2,k_3,k_4,k_5)= (pq,-q^2,-pq,p^2,p^2-q^2)
  \end{equation*}
creates the zero mode by resonant interaction of nonzero modes. 
\end{lemma}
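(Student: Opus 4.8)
The plan is to verify directly that the proposed $5$-tuple satisfies the two defining equations of $\mathrm{Res}_0$ for $\si=2$, namely $\sum_{\ell=1}^{5}(-1)^{\ell+1}k_\ell=0$ and $\sum_{\ell=1}^{5}(-1)^{\ell+1}k_\ell^2=0$, and then to check that none of the five entries vanishes and that $\dot a_{0}$ is genuinely nonzero at $t=0$, i.e. that the tuple is not cancelled by other resonant contributions. Concretely, with $(k_1,k_2,k_3,k_4,k_5)=(pq,-q^2,-pq,p^2,p^2-q^2)$ the first alternating sum is $pq-(-q^2)+(-pq)-p^2+(p^2-q^2)=pq+q^2-pq-p^2+p^2-q^2=0$, so the linear constraint holds identically in $p,q$. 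For the quadratic constraint one computes $k_1^2-k_2^2+k_3^2-k_4^2+k_5^2=p^2q^2-q^4+p^2q^2-p^4+(p^2-q^2)^2=2p^2q^2-p^4-q^4+(p^4-2p^2q^2+q^4)=0$, again identically. This is exactly the instance of the construction above with $(k_1,k_3,k_5)=(a,-a,b)$, $a=pq$, $b=p^2-q^2$, $N=p^2+q^2$, using the classical Pythagorean parametrization $b^2+(2a)^2=(p^2-q^2)^2+(2pq)^2=(p^2+q^2)^2=N^2$; I would simply record that the arithmetic matches.

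Next I would check the nondegeneracy hypotheses. Since $p,q\neq 0$ and $p\neq q$, we have $k_1=pq\neq 0$, $k_2=-q^2\neq 0$, $k_3=-pq\neq 0$, $k_4=p^2\neq 0$; for $k_5=p^2-q^2$ one uses $p\neq q$, and I should also note $p\neq -q$ can be assumed (or that if $p=-q$ one replaces the pair by another admissible one), so that $k_5\neq 0$ as well. Thus all five indices lie in $\Z\setminus\{0\}$, so the tuple contributes a term $a_{k_1}\bar a_{k_2}a_{k_3}\bar a_{k_4}a_{k_5}$ to the right-hand side of \eqref{eq:transport} for $j=0$ with all factors evaluated at nonzero modes.

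Finally I would argue that this forces $\dot a_0(0)\neq 0$ for a suitable choice of initial amplitudes $(\alpha_j)$. Take $S=\{k_1,k_2,k_3,k_4,k_5\}$ as support of the initial data with, say, all $\alpha_{k_\ell}$ equal to a fixed nonzero real constant, extended to be zero outside $S$. The sum in \eqref{eq:transport} for $j=0$ at $t=0$ runs over all $5$-tuples in $\mathrm{Res}_0$ with entries in $S$; grouping terms by the multiset of indices and counting the number of orderings, one gets $i\dot a_0(0)=\sum c_{\text{tuple}}\,\alpha_{k_1}\bar\alpha_{k_2}\cdots$ with strictly positive combinatorial multiplicities $c_{\text{tuple}}$, and since all $\alpha$'s are chosen equal and real the contributions cannot cancel, so $\dot a_0(0)\neq 0$. (If one is worried about hidden cancellations from other resonant tuples inside $S$, it suffices to choose $|p|,|q|$ large and well-separated so that $S$ is a $B_2^+$-type set in which $0$ is realized as an alternating sum only by the intended tuple and its trivial permutations, or alternatively to perturb the $\alpha$'s generically.) The main obstacle is precisely this last point: ensuring the created zero-mode coefficient does not accidentally vanish because of competing resonant interactions among the chosen frequencies; everything else is the elementary algebraic identity checked above.
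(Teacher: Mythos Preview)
Your verification is correct and matches the paper's approach, which presents the lemma as the outcome of the preceding derivation (the Pythagorean parametrization with $(k_1,k_3,k_5)=(a,-a,b)$, $a=pq$, $b=p^2-q^2$) rather than as a separate proof; you simply recast that derivation as a direct check of the two resonance identities, which is entirely equivalent.

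Two remarks. First, you are right to flag the case $p=-q$: under the hypotheses as stated in the lemma ($p\neq q$ only), one gets $k_5=p^2-q^2=0$, so the tuple is \emph{not} composed of nonzero modes. The paper's statement is slightly sloppy here; your fix (assume $p\neq -q$, which is harmless since the lemma is only used to exhibit \emph{one} example) is the natural one. Second, your extra paragraph on $\dot a_0(0)\neq 0$ goes beyond what the paper spells out, but your argument is clean and needs no hedging: with all $\alpha_j$ equal to a nonzero real $\alpha$ on $S$ and zero elsewhere, every term $\alpha_{m_1}\bar\alpha_{m_2}\alpha_{m_3}\bar\alpha_{m_4}\alpha_{m_5}$ in the sum for $i\dot a_0(0)$ equals $\alpha^5$, so there can be no cancellation regardless of how many resonant tuples with entries in $S$ there are. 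The parenthetical about $B_2^+$-sets or generic perturbations is unnecessary.
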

\begin{example}\label{ex:quintic}
With $p=2$ and $q=1$, we find 
\begin{equation*}
  (k_1,k_2,k_3,k_4,k_5)=(2,-1,-2,4,3).
\end{equation*}
\end{example}
\begin{remark}
  In \cite{GrTh12}, the creation of a mode $k_6$ by resonant interaction of the modes
  $k_1,k_2,k_3,k_4,k_5$ is studied. Under the specific assumptions that
  $k_j=k_\ell$ for two distinct odd and $k_n=k_m$ for two
  distinct even indices in $\{1, 2, 3, 4, 5\}$, a complete characterization of
  the corresponding resonant set is provided. Note that these assumptions are 
  not satisfied by the 5-tuples considered in Lemma~\ref{lem:quintic}. 
On the other hand, it follows from the characterization of the resonant set in 
$\{1, 2, 3, 4, 5\}$ that for $k_6=0$, each of the $5$-tuples of the 
  form
  \begin{equation*}
     (k_1,k_2,k_3,k_4,k_5)=
\left\{
\begin{aligned} &(k,3k,k,3k,4k), \\
  &(k,3k,4k,3k,k),\\
 \text{ or }& (4k,3k,k,3k,k),
\end{aligned}
\right.
     \quad k\in \Z\setminus\{0\}, 
  \end{equation*}
create the zero mode by resonant interaction.
\end{remark}

\section{Geometrical optics for the modified NLS equation}
\label{sec:average}

In this section, we consider the equation
\begin{equation}
  \label{eq:average}
  i\eps\d_t u^\eps +\frac{\eps^2}{2}\Delta u^\eps = \eps
  |u^\eps|^2u^\eps - \frac{2\eps}{(2\pi)^d}\(\int_{\T^d} |u^\eps(t,x)|^2dx\)u^\eps,\quad
  x\in \T^d,
\end{equation}
along with the initial data \eqref{eq:ci}. 

\subsection{One-dimensional case}
\label{sec:average1D}

In view of the analysis of
Subsection~\ref{sec:second}, one has
\begin{align*}
  \int_\T |u^\eps_{\rm app}(t,x)|^2dx &= \int_\T \left| \sum_{j\in
       \Z}\(a_j(t)+\eps b^\eps_j(t)\) e^{i\phi_j(t,x)/\eps}\right|^2dx\\
&= \int_\T \sum_{j,k\in \Z}\(a_j(t)+\eps
  b^\eps_j(t)\)\(\bar a_k(t)+\eps\bar b^\eps_k(t)\)
  e^{i\(\phi_j(t,x)-\phi_k(t,x)\)/\eps} dx\\
& = 2\pi\sum_{j\in \Z}\(|a_j(t)|^2+\eps\(\bar a_j(t)b^\eps_j(t)
  +a_j(t)\bar b^\eps_j(t)\) +\eps^2 |b^\eps_j(t)|^2\),
\end{align*}
since the family $(e^{i\phi_j(t,\cdot)/\eps})_{j\in \Z}$ is
orthogonal in $L^2(\T)$ and $|\T|=2\pi$. 
It then follows that for any $j \in \mathbb Z,$  the formula corresponding to \eqref{solution:transport1D} in the case of \eqref{eq:average}, becomes 
\begin{equation}
  \label{eq:transport1Daverage}
    i\dot a_j = -|a_j|^2 a_j,\quad
  a_j(0)=\alpha_j,
\end{equation}
and thus $a_j(t)=\alpha_j e^{i|\alpha_j|^2t}$, showing that the $a_j'$s are no longer
coupled. (This is an indication that equation \eqref{eq:average} 
might be more stable than \eqref{eq:nls}. )
Furthermore \eqref{eq:b} becomes
\begin{equation*}
\begin{aligned}
   b^\eps_j(t) =& -i\sum_{(k,\ell,m)\in \mathrm{Res}_j} \int_0^t
  \(a_k\bar a_\ell b^\eps_m+a_k\bar b^\eps_\ell a_m+b^\eps_k\bar a_\ell
                               a_m\)(\tau)d\tau\\
-\sum_{{k-\ell+m=j}\atop{k^2-\ell^2+m^2\not = j^2}}&
  \frac{2}{j^2-k^2+\ell^2-m^2} \( \(a_k \bar a_\ell a_m\)(t)
  e^{i\(j^2-k^2+\ell^2-m^2\)\frac{t}{2\eps}}-\alpha_k\bar \alpha_\ell
  \alpha_m\)\\
& +2i \int_0^t \( b^\eps_j\sum_{k\in \Z}|a_k|^2 + a_j\sum_{k\in\Z}
\(\bar a_kb^\eps_k +a_k\bar b^\eps_k\)\)(\tau)d\tau .
\end{aligned}
\end{equation*}

\subsection{Multi-dimensional case}
\label{sec:averagemultiD}

When $d\ge 2$, we  argue as in
Subsection~\ref{sec:resonant}, choosing as initial data
\begin{equation*}
  u^\eps(0,x) = e^{ix_1/\eps} + e^{ix_2/\eps}+e^{i(x_1+x_2)/\eps} \, .
\end{equation*}
The characterization of the resonant sets $\mathrm{Res}_j$, described in item (ii) of
Proposition~\ref{prop:resonant}, shows  that the only possible new mode
created by cubic interaction is the zero mode. Setting
\begin{equation*}
  k:=(1,0,0_{\Z^{d-2}}),\quad \ell:=(1,1,0_{\Z^{d-2}}), \quad m:=(0,1,0_{\Z^{d-2}}),
\end{equation*}
the resonant set $Res_0$ is given by
\begin{equation*}
  \{(k,\ell,m),\ (m,\ell,k),\ (k,k,0),\ (0,k,k),\
  (\ell,\ell,0),\ (0,\ell,\ell),\  (m,m,0),\ (0,m,m), \ (0,0,0)\}
\end{equation*}
and the zero mode $a_0$ satisfies
\begin{equation*}
  i\dot a_0 = 2 a_k\bar a_\ell a_m -|a_0|^2a_0,\quad a_{0\mid t=0}=0.
\end{equation*}
In particular, $ i\dot a_0(0)=2$, meaning that the zero mode is created through
cubic interaction of nonzero modes.

\section{Proof of Theorem~\ref{theo:main}}
\label{sec:proofs}

\subsection{Scaling}
\label{sec:scaling}

We follow the same strategy as in \cite{CDS12}: as a first step, we
relate equations \eqref{eq:nls} and \eqref{eq:nlssemi} respectively
\eqref{eq:nlsaverage} and \eqref{eq:average} by an appropriate scaling
of all the quantities involved: let $\psi(t,x)$ be a
solution of \eqref{eq:nls} and  $u^\eps$ be
of the form
\begin{equation*}
  u^\eps(t,x) = \eps^\alpha \psi(\eps^\beta t, \eps^\gamma x) \, .
\end{equation*}
Such a function solves \eqref{eq:nlssemi} iff
\begin{equation*}
  1+\beta=2+2\gamma=1+2\si\alpha. 
\end{equation*}
Keeping $\beta$ as the only parameter, we have
\begin{equation}\label{eq:scaling}
  u^\eps(t,x) = \eps^{\beta/(2\si)} \psi\(\eps^\beta t, \eps^{\frac{\beta-1}{2}} x\).
\end{equation}
In order that the initial data for $u^\eps$ is of the form \eqref{eq:ci}, 
the one for $\psi$ is chosen so that
$\eps^{\beta/(2\si)} \psi\(0, \eps^{\frac{\beta-1}{2}} x\)=\sum_{j\in
    \Z^d}\alpha_j e^{ij\cdot x/\eps}.$ It means that 
\begin{equation}\label{eq:scaling:psi}
\psi(0,x) =
  \eps^{-\beta/(2\si)} \sum_{j\in    \Z^d}\alpha_j e^{ij\cdot x/\eps^{\frac{1+\beta}{2}}}. 
\end{equation}
Furthermore, to assure that both
$\psi$ and $u^\eps$ are periodic functions and hence welldefined on $\T^d$, we require
that $1/\eps=
N^\kappa\in \N$, for some integers $N,\kappa$, where $\kappa$ is
chosen so that for a given \emph{rational number} $\beta>0$, 
\begin{equation*}
  \frac{1}{\eps^{\frac{1+\beta}{2}}} = N^{\kappa
    \frac{1+\beta}{2}}\text{ is an integer}. 
\end{equation*}
In the sequel, for any given rational number $\beta > 0$, we will consider sequences $\eps_n\to 0$ so that the above requirements are fulfilled.
\smallbreak

The strategy for proving the statements of Theorem~\ref{theo:main} is the following one: 
the initial data for $u^\eps$ (or equivalently for $\psi$), is chosen to be a finite sum
of \emph{nonzero} modes, which create the zero mode by resonant interaction
at leading order, $\dot a_0(0) \ne 0,$
except in the cubic one-dimensional case, where the zero mode  is
created at the level of the corrector $b_0$. Due to the choice of the scaling, the
zero mode of $\psi$ comes with a factor which is increasing in $\eps$.
Since the absolute value of the zero mode 
bounds the norm $\| \cdot \|_{\F L^{s,p}(\mathbb T^d)}$ of any Fourier Lebesgue space from below, 
it follows that
for any $s < 0, 1 \le p \le \infty,$ the sequence  
$( \|u^{\eps_n}(t_n)\|_{\F L^{s,p}(\mathbb T^d)} )_{n \ge 1}$ 
is unbounded for appropriate sequences $(\eps_n)_{n \ge 1}$, 
$(t_n)_{n \ge 1}$, converging both to $0$.

\subsection{Norm inflation in the multidimensional case}
\label{sec:ill-posed-multi}

Suppose $d\ge 2$, $\si\ge 1$. 
For any fixed $s<0$, there exists a rational number $\beta>0$ so
that 
\begin{equation*}
  |s|\frac{\beta+1}{2}>\frac{\beta}{2\si}.
\end{equation*}
Note that $\beta\to 0$ as $s\to 0$. 
We then choose a sequence $(\eps_n)_{n \ge 1}$ with $\eps_n \to 0$ as above.
Taking into account the discussion at the beginning of Subsection~\ref{sec:creation}, it suffices to consider example \eqref{example1}.
With the above scaling, $\psi_n(0, x)$ is then given by
\begin{equation*}
  \psi_n (0,x) = \eps_n^{-\beta/(2\si)}\(
  e^{ix_1/\eps_n^{\frac{1+\beta}{2}}} +
  e^{ix_2/\eps_n^{\frac{1+\beta}{2}}} +
  e^{i(x_1+x_2)/\eps_n^{\frac{1+\beta}{2}}} \). 
\end{equation*}
For any $p\in [1,\infty]$, we have
\begin{equation*}
  \|\psi_n(0)\|_{\F L^{s,p}(\T^d)}\approx \eps_n^{-\beta/(2\si)
    -s(\beta+1)/2}=  \eps_n^{-\beta/(2\si)
    +|s|(\beta+1)/2} \, ,
\end{equation*}
implying that
\begin{equation*}
  \|\psi_n(0)\|_{\F L^{s,p}(\T^d)}\Tend n\infty 0 \, . \quad
\end{equation*}
In Section~\ref{sec:approx} we have seen that there exists $\tau>0$
with $a_0(\tau)\not =0$. Setting $t_n=\tau\eps_n^\beta$, one has
$t_n \Tend n\infty 0 $.
With $\psi_{n,\rm app}(t, x)$ obtained from $u^{\eps_n}_{\rm app}(t,x)$ by the above
scaling, it follows that for any $r\in \R$ and $p\in [1,\infty]$,
\begin{equation*}
  \|\psi_{n,\rm app}(t_n)\|_{\F L^{r,p}(\T^d)} \ge \eps_n^{-\beta/(2\si)}
  |a_0(\tau)|\Tend n \infty +\infty \, .
\end{equation*}
Note that $W \hookrightarrow \F L^{r,p}(\T^d)\ $for any $r\le 0$ and $p\in [1,\infty]$
and hence
\begin{equation*}
  \|\psi_n(t)- \psi_{n,\rm app}(t)\|_{\F L^{r,p}(\T^d)}\lesssim \|\psi_n(t)-
  \psi_{n,\rm app}(t)\|_{W} \, .
\end{equation*}
In view of
\eqref{eq:scaling} and the scaling invariance of the norm $\|\cdot\|_W$
(see item (i) of Lemma~\ref{lem:wienerschrodinger}),
Proposition~\ref{prop:approx} then implies
\begin{equation*}
  \|\psi_n(t_n)- \psi_{n,\rm app}(t_n)\|_{\F L^{r,p}(\T^d)}\lesssim
  \eps_n^{1-\beta/(2\si)}\lesssim \eps_n \|\psi_{n,\rm app}(t_n)\|_{\F
    L^{r,p}(\T^d)}.
\end{equation*}
Altogether we have shown that 
$\|\psi_n(t_n)\|_{\F L^{r,p}(\T^d)}\sim \|\psi_{n,\rm app}(t_n)\|_{\F L^{r,p}(\T^d)}\to \infty$
and item (i) of
Theorem~\ref{theo:main} is proved in the case $d\ge 2,$ $\si\ge 1$. 

\subsection{Norm inflation in the quintic one-dimensional case}
\label{sec:ill-posed-quintic}

The case $d=1$, $\si\ge 2$, is dealt with along the same lines as
the case $d\ge 2,$ $\si\ge 1$, treated in the previous subsection.
Since by Lemma~\ref{lem:quintic}, it is possible to create the zero mode by quintic
interaction of nonzero modes, the above argument is readily
adapted by choosing, for instance, initial data as in Example~\ref{ex:quintic},
\begin{equation*}
  \psi_n(0,x) =  \eps_n^{-\beta/(2\si)}\(
  e^{2ix/\eps_n^{\frac{1+\beta}{2}}} +
  e^{-ix/\eps_n^{\frac{1+\beta}{2}}} +
  e^{-2ix/\eps_n^{\frac{1+\beta}{2}}}  +
  e^{4ix/\eps_n^{\frac{1+\beta}{2}}}  +
  e^{3ix/\eps_n^{\frac{1+\beta}{2}}} \). 
\end{equation*}

\subsection{Norm inflation in the cubic one-dimensional case}\label{case:cubic1d}

In the cubic one-dimensional case, we have seen in
Subsection~\ref{sec:resonant} that $\alpha_j = 0$ implies $a_j(t)= 0$ for any $t$.
 The same phenomena is true in the case of \eqref{eq:average}. 
Therefore, the previous analysis has to be modified. We consider the higher order approximation, discussed in Subsection~\ref{sec:second}. 
We want to show that
for appropriate initial data $\psi_n (0,x)$ , $b_0^\eps(\tau^\eps) \approx 1$ for some 
$\tau^\eps >0$ with $\tau^\eps \approx \eps$.
Note that in view of \eqref{eq:b}, initial data with only one
nonzero mode  is not sufficient to ensure that $ b_j^\eps$ has this property.
We therefore choose
\begin{equation*}
  \psi_n (0,x) = \eps_n^{-\beta/2}\(
  e^{ix/\eps_n^{\frac{1+\beta}{2}}} +
  e^{2ix/\eps_n^{\frac{1+\beta}{2}}} \)
\end{equation*}
as initial data.
By \eqref{eq:scaling:psi}, the corresponding initial data for $u^\eps$ is given by
\begin{equation*}
  u^\eps(0,x) = e^{ix/\eps}+  e^{2ix/\eps} \, .
\end{equation*}
It means that $\alpha_1=1,$ $\alpha_2=1$, and $\alpha_j=0$ for all $j\in
\Z\setminus \{1,2\}$. By the analysis of Subsection~\ref{sec:resonant},
\begin{equation*}
  a_1(t)=a_2(t) = e^{-3it},\quad a_j(t)\equiv 0\text{ for }j\in
\Z\setminus \{1,2\}. 
\end{equation*}
The creation of $b^\eps_j$'s can have two causes:
\begin{itemize}
\item the source term \eqref{formula:b2}  is not zero,
  or
\item the coupling between the $b^\eps_j$'s, due to \eqref{formula:b1},
  causes the creation of $b^\eps_j$'s after others have been
  created by a nonzero source term. 
\end{itemize}
We examine the two possibilities separately. Let us begin with the analysis of 
\eqref{formula:b2}.
The only non-resonant
configurations $k,\ell,m \in \{1,2\}$ in the sum in \eqref{formula:b2} are
\begin{equation*}
  (k,\ell,m) = (1, -2, 1) \quad \mbox{and } \quad (k,\ell,m) =( 2, -1, 2). 
\end{equation*}
Since $1-2+1=0$ and $2 -1 + 2 =3$,  $b^\eps_0$ respectively $b^\eps_3$ 
are created through these configurations. 
Furthermore, for $j\in\Z\setminus \{0,3\}$,  \eqref{formula:b2} is zero. 
To address the possibility of creation of $b^\eps_j$'s through coupling, 
consider the first term in the integral of \eqref{formula:b1}:
\begin{equation*}
  a_k\bar a_\ell b^\eps_m,\quad (k,\ell,m)\in \mathrm{Res}_j.
\end{equation*}
For this term to be non-zero, we have necessarily $k,\ell\in
\{1,2\}$. Then, in view of item (i) of
Proposition~\ref{prop:resonant}, $m\in \{1,2\}$, and we infer $j\in
\{1,2\}$. The same argument can be repeated for the other two terms,
$a_k\bar b^\eps_\ell a_m$ and $b^\eps_k \bar a_\ell a_m$. Therefore, the terms
$b^\eps_1$ and $b^\eps_2$ are coupled. But since they solve a homogeneous system
with zero initial data, they remain identically zero.

\noindent Since by \eqref{formula:b1} - \eqref{formula:b2}, $\dot b^\eps_0(0) = - i/\eps$
and $\dot b^\eps_3(0) = - i/\eps$, 
altogether we have proved that precisely $b^\eps_0$
and $b^\eps_3$ are created. In particular, we compute
\begin{equation*}
  b^\eps_0(t) = -4i\int_0^t b^\eps_0(\tau)d\tau -\(e^{-3it +it/\eps}-1\) \, ,
\end{equation*}
yielding the following explicit solution
\begin{equation*}
  b^\eps_0(t) = -\frac{1-3\eps}{1+\eps}e^{-4it} \( e^{it+it/\eps}-1\) 
\end{equation*}
and hence the following formula
\begin{equation*}
  |b^\eps_0(t)| = 2\frac{1-3\eps}{1+\eps}\left| \sin\(
    (1+\eps)\frac{t}{2\eps}\)\right|. 
\end{equation*}
Thus, for $0<\eps\ll 1$, there exists $\tau_\eps\approx \eps$ such that
$|b^\eps_0(\tau_\eps)|=1$.  From this point on we can argue as in the previous
subsections. For any $p\in [1,\infty]$,
\begin{equation*}
  \|\psi_n(0)\|_{\F L^{s,p}(\T)}\approx \eps^{-\beta/2+|s|(\beta+1)/2}.
\end{equation*}
Hence to ensure that $ \|\psi_n(0)\|_{\F L^{s,p}(\T)} \to 0$ as $n \to \infty,$
we need to impose that 
\begin{equation}\label{eq:conds}
  |s|>\frac{\beta}{\beta+1}.
\end{equation}
By taking into account only the term $\eps b^\eps_0(t) e^{i\phi_(t,x)/\eps}$
in $u^\eps_{\rm app}(t,x)$, it follows that for $t_n = \eps_n^\beta \tau_{\eps_n}$, 
\begin{equation}\label{lower bound approx sol}
   \|\psi_{n,\rm app}(t_n)\|_{\F L^{r,p}(\T)} \ge \eps^{-\beta/2+1},
\end{equation}
where the extra power of $\eps$ stems from the factor in front of
$b^\eps_0$. Finally, for $r\le 0$,
\begin{align*}
 \|\psi_n(t_n)- \psi_{n,\rm app}(t_n)\|_{\F L^{r,p}(\T)}&\lesssim \|\psi_n(t_n)-
  \psi_{n,\rm app}(t_n)\|_{W}\\
&\lesssim \eps_n^{-\beta/2}\|u^{\eps_n}(\tau_{\eps_n})
  -u^{\eps_n}_{\rm app}(\tau_{\eps_n})\|_W \,  . 
\end{align*}
By item (ii) of Proposition~\ref{prop:approx}, it then follows that 
\begin{equation*}
 \|\psi_n(t_n)- \psi_{n,\rm app}(t_n)\|_{\F L^{r,p}(\T)}
\lesssim\eps_n^{2-\beta/2},
\end{equation*}
implying that
\begin{equation*}
  \|\psi_n(t_n)- \psi_{n,\rm app}(t_n)\|_{\F L^{r,p}(\T)}\lesssim \eps_n
  \|\psi_{n,\rm app}(t_n)\|_{\F L^{r,p}(\T)}, 
\end{equation*}
and  hence $ \|\psi_n(t_n)\|_{\F L^{r,p}(\T)} \approx \|\psi_{n,\rm
  app}(t_n)\|_{\F L^{r,p}(\T)}$ as $n\to \infty$.  By \eqref{lower bound approx sol},
the sequence $ (\|\psi_n(t_n)\|_{\F L^{r,p}(\T)})_{n \ge 1}$  is thus unbounded
provided that $\beta>2$. Taking into account that $s$ is assumed to be negative, the condition $\beta>2$ is compatible with \eqref{eq:conds}
provided that $s<-2/3$.

\subsection{Norm inflation for equation~\eqref{eq:nlsaverage}}

To complete the proof of Theorem~\ref{theo:main}, it remains to
consider equation \eqref{eq:nlsaverage}. We already noted in
Subsection~\ref{sec:scaling} that the scaling introduced there establishes
a one-to-one correspondence between solutions of
\eqref{eq:nlsaverage} and those of \eqref{eq:average}.
As initial data for $u^\eps$ we again choose
\begin{equation*}
  u^\eps(0,x) = e^{ix/\eps}+  e^{2ix/\eps} \, .
\end{equation*}
By \eqref{eq:transport1Daverage},
\begin{equation*}
  a_1(t)=a_2(t) = e^{it},\quad a_j(t)\equiv 0 \quad \forall j\in
  \Z\setminus\{1,2\}. 
\end{equation*}
A similar combinatorial analysis as above shows that only $b^\eps_0$ and
$b^\eps_3$ are created. In the case considered,  $b_0$ is given by
\begin{equation*}
  b_0^\eps(t) =  -\(e^{it +it/\eps}-1\) \, ,
\end{equation*}
implying that
\begin{equation*}
  |b_0^\eps(t)| = 2\left|\sin\((1+\eps)\frac{t}{2\eps}\)\right| \, .
\end{equation*} 
To finish the proof, we then can argue  in the same way as in the previous subsection.

\bibliographystyle{smfplain}

\bibliography{biblio}

\providecommand{\bysame}{\leavevmode ---\ }
\providecommand{\og}{``}
\providecommand{\fg}{''}
\providecommand{\smfandname}{\&}
\providecommand{\smfedsname}{\'eds.}
\providecommand{\smfedname}{\'ed.}
\providecommand{\smfmastersthesisname}{M\'emoire}
\providecommand{\smfphdthesisname}{Th\`ese}
\begin{thebibliography}{10}

\bibitem{DiWi}
{\og Dispersive wiki page, \emph{Cubic {NLS}}\fg} --
  \url{http://wiki.math.toronto.edu/DispersiveWiki}.

\bibitem{ACMA}
{\scshape T.~Alazard {\normalfont \smfandname} R.~Carles} -- {\og Loss of
  regularity for super-critical nonlinear {S}chr\"odinger equations\fg},
  \emph{Math. Ann.} \textbf{343} (2009), no.~2, p.~397--420.

\bibitem{BeTa05}
{\scshape I.~Bejenaru {\normalfont \smfandname} T.~Tao} -- {\og Sharp
  well-posedness and ill-posedness results for a quadratic nonlinear
  {S}chr{\"o}dinger equation\fg}, \emph{J. Funct. Anal.} \textbf{233} (2005),
  no.~1, p.~228--259.

\bibitem{CaBook}
{\scshape R.~Carles} -- \emph{Semi-classical analysis for nonlinear
  {S}chr\"odinger equations}, World Scientific Publishing Co. Pte. Ltd.,
  Hackensack, NJ, 2008.

\bibitem{CDS10}
{\scshape R.~Carles, E.~Dumas {\normalfont \smfandname} C.~Sparber} -- {\og
  Multiphase weakly nonlinear geometric optics for {S}chr{\"o}dinger
  equations\fg}, \emph{SIAM J. Math. Anal.} \textbf{42} (2010), no.~1,
  p.~489--518.

\bibitem{CDS12}
\bysame , {\og Geometric optics and instability for {NLS} and
  {D}avey-{S}tewartson models\fg}, \emph{J. Eur. Math. Soc. (JEMS)} \textbf{14}
  (2012), no.~6, p.~1885--1921.

\bibitem{CaFa12}
{\scshape R.~Carles {\normalfont \smfandname} E.~Faou} -- {\og Energy cascade
  for {NLS} on the torus\fg}, \emph{Discrete Contin. Dyn. Syst.} \textbf{32}
  (2012), no.~6, p.~2063--2077.

\bibitem{Ch-p}
{\scshape M.~Christ} -- {\og Nonuniqueness of weak solutions of the nonlinear
  {S}chr{\"o}dinger equation,\fg}, preprint. Archived at
  \url{http://arxiv.org/abs/math/0503366}, 2005.

\bibitem{Ch07}
\bysame , {\og Power series solution of a nonlinear {S}chr\"odinger
  equation\fg}, in \emph{Mathematical aspects of nonlinear dispersive
  equations}, Ann. of Math. Stud., vol. 163, Princeton Univ. Press, Princeton,
  NJ, 2007, p.~131--155.

\bibitem{CCT2}
{\scshape M.~Christ, J.~Colliander {\normalfont \smfandname} T.~Tao} -- {\og
  Ill-posedness for nonlinear {S}chr\"odinger and wave equations\fg},
  \url{http://arxiv.org/abs/math.AP/0311048}.

\bibitem{CCT0}
\bysame , {\og Asymptotics, frequency modulation, and low regularity
  ill-posedness for canonical defocusing equations\fg}, \emph{Amer. J. Math.}
  \textbf{125} (2003), no.~6, p.~1235--1293.

\bibitem{CoLa09}
{\scshape M.~Colin {\normalfont \smfandname} D.~Lannes} -- {\og Short pulses
  approximations in dispersive media\fg}, \emph{SIAM J. Math. Anal.}
  \textbf{41} (2009), no.~2, p.~708--732.

\bibitem{Iturbulent}
{\scshape J.~Colliander, M.~Keel, G.~Staffilani, H.~Takaoka {\normalfont
  \smfandname} T.~Tao} -- {\og Transfer of energy to high frequencies in the
  cubic defocusing nonlinear {S}chr\"odinger equation\fg}, \emph{Invent. Math.}
  \textbf{181} (2010), no.~1, p.~39--113.

\bibitem{GrTh12}
{\scshape B.~Gr{\'e}bert {\normalfont \smfandname} L.~Thomann} -- {\og Resonant
  dynamics for the quintic nonlinear {S}chr\"odinger equation\fg}, \emph{Ann.
  Inst. H. Poincar\'e Anal. Non Lin\'eaire} \textbf{29} (2012), no.~3,
  p.~455--477.

\bibitem{GrHe08}
{\scshape A.~Gr{\"u}nrock {\normalfont \smfandname} S.~Herr} -- {\og Low
  regularity local well-posedness of the derivative nonlinear {S}chr\"odinger
  equation with periodic initial data\fg}, \emph{SIAM J. Math. Anal.}
  \textbf{39} (2008), no.~6, p.~1890--1920.

\bibitem{IwOg15}
{\scshape T.~Iwabuchi {\normalfont \smfandname} T.~Ogawa} -- {\og Ill-posedness
  for the nonlinear {S}chr\"odinger equation with quadratic non-linearity in
  low dimensions\fg}, \emph{Trans. Amer. Math. Soc.} \textbf{367} (2015),
  no.~4, p.~2613--2630.

\bibitem{JMRWiener}
{\scshape J.-L. Joly, G.~M\'etivier {\normalfont \smfandname} J.~Rauch} -- {\og
  Coherent nonlinear waves and the {W}iener algebra\fg}, \emph{Ann. Inst.
  Fourier (Grenoble)} \textbf{44} (1994), no.~1, p.~167--196.

\bibitem{OhSu12}
{\scshape T.~Oh {\normalfont \smfandname} C.~Sulem} -- {\og On the
  one-dimensional cubic nonlinear {S}chr\"odinger equation below {$L^2$}\fg},
  \emph{Kyoto J. Math.} \textbf{52} (2012), no.~1, p.~99--115.

\end{thebibliography}

\end{document}